\DeclareSymbolFont{lettersA}{U}{txmia}{m}{it}
\DeclareMathSymbol{\R}{\mathord}{lettersA}{"92}
\DeclareMathSymbol{\C}{\mathord}{lettersA}{"83}
\newcommand{\fast}{\{F\}}
\newcommand{\slow}{\{S\}}
\newcommand{\eslow}{\{E\}}
\newcommand{\islow}{\{I\}}
\newcommand{\esloweslow}{\{E,E\}}
\newcommand{\eslowfast}{\{E,F\}}
\newcommand{\islowislow}{\{I,I\}}
\newcommand{\islowfast}{\{I,F\}}
\newcommand{\eslowislow}{\{E,I\}}
\newcommand{\isloweslow}{\{I,E\}}
\newcommand{\fastfast}{\{F,F\}}
\newcommand{\slowfast}{\{S,F\}}
\newcommand{\fasteslow}{\{F,E\}}
\newcommand{\fastislow}{\{F,I\}}
\newcommand{\ffast}{f^{\fast}}
\newcommand{\ffasti}{f^{\{F,i\}}}
\newcommand{\fasti}{\{F,i\}}
\newcommand{\fslow}{f^{\slow}}
\newcommand{\feslow}{f^{\eslow}}
\newcommand{\fislow}{f^{\islow}}
\newcommand{\sslowt}{\tilde{s}^{\slow}}
\newcommand{\sslow}{s^{\slow}}
\newcommand{\sfast}{s^{\fast}}
\newcommand{\sumjtoi}{\sum\limits_{j=1}^i}
\newcommand{\Abf}{\mathbf{A}}
\newcommand{\bbf}{\mathbf{b}}
\newcommand{\cbf}{\mathbf{c}}
\crefname{hypothesis}{Hypothesis}{Hypotheses}
\title{Implicit-Explicit Multirate Infinitesimal GARK Methods \thanks{Submitted to the editors DATE.
\funding{Support for this work was provided by the Scientific Discovery through Advanced Computing (SciDAC) project ``Frameworks, Algorithms and Scalable Technologies for Mathematics (FASTMath),'' funded by the U.S. Department of Energy Office of Advanced Scientific Computing Research and National Nuclear Security Administration, under Lawrence Livermore National Laboratory subcontract B626484 and DOE award DE-SC0021354.}}}
\author{
Rujeko Chinomona\thanks{Department of Mathematics, Southern Methodist University, Dallas, TX (\email{rchinomona@smu.edu}, \email{reynolds@smu.edu}).}
\and
Daniel R.~Reynolds\footnotemark[2]}
\newcommand*{\addFileDependency}[1]{% argument=file name and extension
  \typeout{(#1)}% latexmk will find this if $recorder=0 (however, in that case, it will ignore #1 if it is a .aux or .pdf file etc and it exists! if it doesn't exist, it will appear in the list of dependents regardless)
  \@addtofilelist{#1}% if you want it to appear in \listfiles, not really necessary and latexmk doesn't use this
  \IfFileExists{#1}{}{\typeout{No file #1.}}% latexmk will find this message if #1 doesn't exist (yet)
}
\begin{document}

\maketitle
% REQUIRED
\begin{abstract}
This work focuses on the development of a new class of high-order accurate methods for multirate time integration of systems of ordinary differential equations.  Unlike other recent work in this area, the proposed methods support mixed implicit-explicit (IMEX) treatment of the slow time scale.  In addition to allowing this slow time scale flexibility, the proposed methods utilize a so-called `infinitesimal' formulation for the fast time scale through definition of a sequence of modified `fast' initial-value problems, that may be solved using any viable algorithm.  We name the proposed class as implicit-explicit multirate infinitesimal generalized-structure additive Runge--Kutta (IMEX-MRI-GARK) methods.  In addition to defining these methods, we prove that they may be viewed as specific instances of GARK methods, and derive a set of order conditions on the IMEX-MRI-GARK coefficients to guarantee both third and fourth order accuracy for the overall multirate method.  Additionally, we provide three specific IMEX-MRI-GARK methods, two of order three and one of order four.  We conclude with numerical simulations on two multirate test problems, demonstrating the methods' predicted convergence rates and comparing their efficiency against both legacy IMEX multirate schemes and recent third and fourth order implicit MRI-GARK methods.
\end{abstract}

% REQUIRED
\begin{keywords}
  multirate time integration, implicit-explicit methods, multirate infinitesimal step, multiple time stepping, ordinary differential equations
\end{keywords}

% REQUIRED
\begin{AMS}
  65L04, 65L05, 65L06, 65L20
\end{AMS}

%%%%%%% body %%%%%%%

\section{Introduction}

In recent years, there has been a renewed interest in time integration methods, most notably those that allow both high accuracy and increased flexibility with regard to how various components of the problem are treated.  These methods range from those that apply a uniform time step size for all components of a problem but vary the algorithms used on individual terms, to `multirate' methods that evolve separate solution components using different step sizes.

Methods in the former category have been introduced primarily to handle problems that couple stiff and nonstiff processes. Here, instead of applying a fully implicit or fully explicit treatment, that would be ideally suited to only the stiff or nonstiff components of the problem, respectively, these approaches allow more robust implicit solvers to be applied to the stiff components, leaving the remaining nonstiff (and frequently nonlinear) components to be treated explicitly.  Various techniques within this category include mixed implicit-explicit (IMEX) additive Runge--Kutta methods \cite{ascherImplicitexplicitRungeKuttaMethods1997,cooperAdditiveMethodsNumerical1980,cooperAdditiveRungeKuttaMethods1983,kennedyAdditiveRungeKutta2003,kennedyHigherorderAdditiveRunge2019,sanduGeneralizedStructureApproachAdditive2015}, exponential Runge--Kutta (ExpRK) and exponential Rosenbrock (ExpRB) methods \cite{hochbruckExplicitExponentialRunge2005,luanExplicitExponentialRunge2014,luanExponentialRosenbrockMethods2014,rainwaterNewClassSplit2014,tokmanNewAdaptiveExponential2012a,tokmanNewClassExponential2011} and general linear methods (GLM) \cite{cardoneConstructionHighlyStable2015,cardoneExtrapolationbasedImplicitexplicitGeneral2014,robertsParallelImplicitexplicitGeneral2020,zhangHighOrderImplicitexplicit2016,zhangPartitionedImplicitExplicit2014}.

Multirate methods, on the other hand, evolve separate solution components or dynamical processes using entirely different time step sizes.  These frequently arise due to `multiphysics' problems wherein separate physical processes evolve on disparate time scales.  Either due to stability or accuracy considerations the `fast' processes must be evolved with small step sizes, but due to their computational cost the `slow' processes are evolved using sometimes much larger time steps.  While simplistic low-order `subcycling' approaches have been employed in computational simulations for decades, research into higher-order approaches has seen dramatic recent advances \cite{Bauer2019Extended,constantinescuExtrapolatedMultirateMethods2013,gearMultirateLinearMultistep1984,guntherMultirateGeneralizedAdditive2016,luanNewClassHighOrder2020,robertsCoupledMultirateInfinitesimal2020,robertsImplicitMultirateGARK2019,Sandu2019,schlegelMultirateRungeKutta2009,schlegelNumericalSolutionMultiscale2012,sextonRelaxedMultirateInfinitesimal2019,wenschMultirateInfinitesimalStep2009}.

In this paper we introduce a hybrid of two of the above techniques: IMEX Runge--Kutta and multirate methods.  While the large majority of recent research on multirate methods has focused on the two-way, additive initial-value problem (IVP) combining a fast $\fast$ and a slow $\slow$ process,
\begin{equation}
    y' = f(t,y) = \ffast(t,y) + \fslow(t,y),\quad t\in[t_0,t_f], \quad y(t_0)=y_0,
    \label{eq:usualform}
\end{equation}
we focus on problems that further break down the slow portion into stiff $\islow$ and nonstiff $\eslow$ components.  Thus we consider the three-way additive IVP:
\begin{equation}
    y' = \fislow(t,y) + \feslow(t,y) + \ffast(t,y),\quad t\in[t_0,t_f], \quad y(t_0)=y_0.
    \label{eq:probode}
\end{equation}
Of the various approaches for multirate integration, we focus on those that are agnostic as to the precise methods applied to the fast dynamics.  These are based on `infinitesimal' formulations, including the seminal work on multirate infinitesimal step (MIS) methods \cite{schlegelMultirateRungeKutta2009,wenschMultirateInfinitesimalStep2009} and their more recent extensions to higher temporal order \cite{Bauer2019Extended,luanNewClassHighOrder2020,robertsCoupledMultirateInfinitesimal2020,Sandu2019,sextonRelaxedMultirateInfinitesimal2019}.
In such formulations, the fast dynamics are assumed to be solved `exactly', typically through evolution of a sequence of modified fast IVPs,
\[
   v'(\theta) = \ffast(\theta,v) + g(\theta),\quad \theta\in[\theta_0,\theta_f], \quad v(\theta_0)=v_0,
\]
where the forcing function $g(\theta)$ is determined by the multirate method to incorporate information from $\fslow$.  In practice, these fast IVPs are solved using another numerical method with smaller step size, which in turn could employ further decompositions via an IMEX, ExpRK, ExpRB, GLM, or multirate approach.

To our knowledge, there exist only two multirate schemes that simultaneously allow IMEX treatment of the slow dynamics and infinitesimal treatment of the fast dynamics, both of which have low accuracy and have been shown to demonstrate poor stability \cite{estepPosterioriPrioriAnalysis2008,roppStabilityOperatorSplitting2005}.  The first of these is the standard first order ``Lie--Trotter'' splitting that performs the time step $y_{n} \to y_{n+1}$ (here $y_n \approx y(t_n)$ and $t_{n+1}-t_{n}=H$) \cite{mclachlanSplittingMethods2002} via the algorithm:
\begin{align}
\label{eq:first_order_split}
    &y_{n+1}^{(1)} = y_{n} + H \feslow(t_{n},y_{n}), \\
    &y_{n+1}^{(2)} = y_{n+1}^{(1)} + H \fislow(t_{n+1},y_{n+1}^{(2)}), \notag \\
    &\text{Solve } \begin{cases}
        \;v(0) = y_{n+1}^{(2)},\\
        \;v'(\theta) = \ffast(t_{n}+\theta,v), \;\text{for } \theta\in[0,H],
    \end{cases} \notag\\
    &y_{n+1} = v(H) \notag.
\end{align}
The second is a variant of the second order ``Strang'' (or ``Strang--Marchuk'') splitting formulation \cite{Marchuk1968,strangConstructionComparisonDifference1968a},
\begin{align}
    \label{eq:Strang_split}
    &y_{n+1}^{(1)} = y_{n} + \tfrac{H}{4} \feslow\left(t_{n},y_{n}\right)\\
    &\qquad + \tfrac{H}{4} \feslow\left(t_{n}+\tfrac{H}{2},y_{n}+\tfrac{H}{2}\feslow\left(t_{n},y_{n}\right)\right),\notag \\
    &y_{n+1}^{(2)} = y_{n+1}^{(1)} + \tfrac{H}{4} \fislow\left(t_{n},y_{n+1}^{(1)}\right) + \tfrac{H}{4} \fislow\left(t_{n}+\tfrac{H}{2},y_{n+1}^{(2)}\right), \notag \\
    &\text{Solve } \begin{cases}
        \;v(0) = y_{n+1}^{(2)},\\
        \;v'(\theta) = \ffast\left(t_n+\theta,v\right), \;\text{for } \theta\in[0,H],
    \end{cases} \notag \\
    &y_{n+1}^{(3)} = v(H), \notag\\
    &y_{n+1}^{(4)} = y_{n+1}^{(3)} + \tfrac{H}{4} \fislow\left(t_{n}+\tfrac{H}{2},y_{n+1}^{(3)}\right) + \tfrac{H}{4} \fislow\left(t_{n+1},y_{n+1}^{(4)}\right), \notag \\
    &y_{n+1} = y_{n+1}^{(4)} + \tfrac{H}{4} \feslow\left(t_{n}+\tfrac{H}{2},y_{n+1}^{(4)}\right) \notag\\
    &\qquad + \tfrac{H}{4} \feslow\left(t_{n+1},y_{n+1}^{(4)}+\tfrac{H}{2}\feslow\left(t_{n}+\tfrac{H}{2},y_{n+1}^{(4)}\right)\right)\notag.
\end{align}
We note that here, the updates $y_{n} \to y_{n+1}^{(1)}$ and $y_{n+1}^{(4)} \to y_{n+1}$ correspond to using the explicit Heun method for a half time-step each, while the updates $y_{n+1}^{(1)} \to y_{n+1}^{(2)}$ and $y_{n+1}^{(3)} \to y_{n+1}^{(4)}$ correspond to using the implicit trapezoid rule for a half time-step each.
However to our knowledge, there do not exist multirate methods allowing IMEX treatment of the slow time scale that have order of accuracy three or higher.  The purpose of this paper is to address this need, through proposal of a new class of \emph{implicit-explicit multirate infinitesimal generalized-structure additive Runge--Kutta} (IMEX-MRI-GARK) methods for problems of the form \eqref{eq:probode}, including derivation of order conditions up to fourth order, and numerical tests to demonstrate the benefit of such methods over the legacy approaches \eqref{eq:first_order_split} and \eqref{eq:Strang_split}, as well as to provide comparisons against recent third and fourth order implicit MRI-GARK methods.

%%%%%%

\section{Implicit-Explicit Multirate Infinitesimal GARK Methods}
\label{section2}
We build our proposed methods by extending the MRI-GARK class of two-component multirate methods \cite{Sandu2019}. An MRI-GARK method with $\sslow$ slow stages is uniquely defined by the abcissae $c^{\slow}\in\R^{s^{\slow}}$ and $(k_{\max}+1)$ lower-triangular matrices of coefficients $\Gamma^{\{k\}}\in\R^{s^{\slow}\times s^{\slow}}$.
One step of an MRI-GARK applied to \eqref{eq:usualform} from $t_n$ to $t_n+H$ is defined by the algorithm
\begin{subequations}
\label{mrigark}
\begin{align}
  &\text{Let}:\; Y_1^{\slow} := y_n \label{eq:mriga} \\
  &\text{For } i = 2,\ldots,\sslow:\\
  &\label{eq:mrigb} \begin{cases}
            \text{Let:} &v(0) := Y_{i-1}^{\slow} \quad\text{and}\quad T_{i-1} := t_n + c_{i-1}^{\slow} H,\\
            \text{Solve:} &v'(\theta) = \Delta c_i^{\slow} \ffast \left(T_{i-1} + \Delta c_i^{\slow} \theta,\, v(\theta)\right) + g(\theta), \text{ for } \theta \in [0,H], \\
            &\text{where } g(\theta) = \sumjtoi \gamma_{i,j}\!\left(\frac{\theta}{H}\right)\, \fslow\!\left(t_n + c_j^{\{S\}}H,Y^{\slow}_j \right) \\
            \text{Let:} &Y_{i}^{\slow} := v(H),
        \end{cases}\\
  &\text{Let}:\; y_{n+1} := Y_{\sslow}^{\slow}. \label{eq:mrigc}
\end{align}
\end{subequations}
Here the abcissae satisfy $0=c_1^{\slow} \le c_{2}^{\slow} \le \cdots \le c_{s^{\slow}}^{\slow} \le 1$, and the increments are given by $\Delta c_{i}^{\slow} := c_{i}^{\slow} - c_{i-1}^{\slow} \ge 0, i = 2,\ldots,\sslow$.  The modified fast IVPs \eqref{eq:mrigb} combine the fast component $\ffast$ and a forcing term $g(\theta)$, and serve to advance the solution between slow stages $Y_i^{\slow}$. The slow tendency terms $\gamma_{i,j}(\tau)$ are polynomials in time that dictate the couplings from the slow to the fast time scale, and are defined from the coefficients $\Gamma^{\{k\}}$ as
\begin{equation}
\label{eq:gammacoeffs1}
    \gamma_{i,j}(\tau) \coloneqq \sum_{k=0}^{k_{\max}} \gamma_{i,j}^{\{k\}} \tau^k.
\end{equation}
These coefficients are derived from order conditions for MRI-GARK methods, and essentially serve to interpolate information from the slow to the fast time scale.  For example, the MRI-GARK-ERK33a method from \cite{Sandu2019} is defined through the coefficients
\[
   c^{\slow} = \begin{bmatrix} 0\\\tfrac13\\\tfrac23\\1 \end{bmatrix}, \quad
   \Gamma^{\{0\}} = \begin{bmatrix} 0&0&0&0\\ \tfrac{1}{3}& 0&0&0\\ -\tfrac{1}{3}&\tfrac{2}{3}&0&0\\ 0& -\tfrac{2}{3} & 1& 0\end{bmatrix}, \quad
   \Gamma^{\{1\}} = \begin{bmatrix} 0&0&0&0\\ 0&0&0&0\\ 0&0&0&0\\ \tfrac{1}{2}&0&-\tfrac{1}{2}&0\end{bmatrix}.
\]
Here, the strictly lower triangular structure of the two $\Gamma$ matrices ensure that each MRI-GARK stage \eqref{eq:mrigb} is explicit, in that the forcing function $g(\theta)$ is uniquely defined from previous slow stages $Y^{\slow}_j$.  Thus one time step of MRI-GARK-ERK33a requires the solution of three modified IVPs \eqref{eq:mrigb}, and three evaluations of $f^{\slow}$.

We note that our presentation of MRI-GARK methods above differs slightly from \cite{Sandu2019}, in that we include the zero-valued coefficients of the first stage in our tables, with the effect that $\Delta c^{\slow}_{1} = 0$, and `implicitness' of an MRI-GARK method is indicated by nonzero $\gamma_{i,j}^{\{k\}}$ values on or above the diagonal, as is typically denoted for Runge--Kutta methods.

\begin{definition}[IMEX-MRI-GARK methods for additive systems]
\label{def:IMEX-MRI-method}
Let $c^{\slow} \in\R^{\sslow}$ with $c^{\slow}_1=0$ and $\Delta c_i^{\slow} = c^{\slow}_{i} - c^{\slow}_{i-1}\ge 0$ for $i=2,\ldots,\sslow$.  For $k=0,\ldots,k_{max}$, let $\Gamma^{\{k\}} \in \R^{\sslow\times\sslow}$ be lower triangular, and let $\Omega^{\{k\}} \in \R^{\sslow\times\sslow}$ be strictly lower triangular, with each having first row identically zero.  One step from $t_n$ to $t_{n+1} = t_n + H$ of an IMEX-MRI-GARK method for the problem \eqref{eq:probode} proceeds as
\begin{subequations}
\label{mriimexs}
\begin{align}
  &\text{Let}:\; Y_1^{\slow} := y_n \label{eq:a} \\
  &\text{For } i = 2,\ldots,\sslow:\\
  &\label{eq:b} \begin{cases}
            \text{Let:} &v(0) := Y_{i-1}^{\slow} \quad\text{and}\quad T_{i-1} := t_n + c_{i-1}^{\slow} H,\\
            \text{Solve:} &v'(\theta) = \Delta c_i^{\slow} \ffast \left(T_{i-1} + \Delta c_i^{\slow} \theta,\, v(\theta)\right)  + g(\theta), \text{ for } \theta \in [0,H], \\
            &\text{where } g(\theta) = \sumjtoi \gamma_{i,j}\!\left(\frac{\theta}{H}\right)\, \fislow_j + \sum\limits_{j=1}^{i-1} \omega_{i,j}\!\left(\frac{\theta}{H}\right)\, \feslow_j, \\
            \text{Let:} &Y_{i}^{\slow} := v(H),
        \end{cases}\\
  &\text{Let}: y_{n+1} := Y_{\sslow}^{\slow}. \label{eq:c}
\end{align}
\end{subequations}
Here $\fislow_j := \fislow\!\left(t_n + c_j^{\{S\}}H,Y_j^{\slow}\right)$ and $\feslow_j := \feslow\!\left(t_n + c_j^{\{S\}}H,Y_j^{\slow}\right)$. Similarly to MRI-GARK methods, the modified IVPs \eqref{eq:b} are defined using linear combinations of the slow implicit and slow explicit function values, where the polynomial coefficient functions are given by
\begin{equation}
    \label{eq:gammaomegacoeffs}
    \gamma_{i,j}(\tau) \coloneqq \sum_{k=0}^{k_{\max}} \gamma_{i,j}^{\{k\}} \tau^k
    \quad\text{and}\quad
    \omega_{i,j}(\tau) \coloneqq \sum_{k=0}^{k_{\max}} \omega_{i,j}^{\{k\}} \tau^k.
\end{equation}
\end{definition}

\subsection{Order Conditions}
In the same way MRI-GARK methods are derived by starting from an explicit or diagonally implicit Runge--Kutta method, IMEX-MRI-GARK methods may be derived by starting with an IMEX additive Runge-Kutta scheme (IMEX-ARK) of order $q$ and having $\sslowt$ stages, characterized by a pair of Butcher tables:
\begin{center}
\begin{tabular}{r|c}
  $c^{\eslow}$ & $A^{\eslow}$ \\
  \hline
               & $b^{\eslow T}$
\end{tabular}
\qquad
\begin{tabular}{r|c}
  $c^{\islow}$ & $A^{\islow}$ \\
  \hline
               & $b^{\islow T}$
\end{tabular}
\end{center}
For compatibility between the IMEX-ARK scheme and our eventual IMEX-MRI-GARK coefficients $c^{\slow}$, $\Gamma^{\{k\}}$ and $\Omega^{\{k\}}$, we only consider IMEX-ARK methods that are ``internally consistent,'' i.e., $c^{\eslow} = c^{\islow} := c^{\slow}$, and that have fully explicit first stage.  Additionally, to reduce complexity in our analyses we follow \cite{robertsCoupledMultirateInfinitesimal2020,Sandu2019} and write the base IMEX-ARK method in stiffly accurate form, i.e., the last row of $A^{\eslow}$ and $A^{\islow}$ equal $b^{\eslow T}$ and $b^{\islow T}$,  respectively.  We note that methods which do not satisfy this requirement in simplest form may easily be converted to the stiffly accurate form by padding $c$ and $A$ with $1$ and $b^T$, respectively:
\begin{center}
\begin{tabular}{r|c|c}
  $c^{\slow}$ & $A^{\eslow}$   & $A^{\islow}$ \\
  \hline
              & $b^{\eslow T}$ & $b^{\islow T}$
\end{tabular}
$\quad\rightarrow\quad$
\begin{tabular}{r|c l|c l}
  $c^{\slow}$ & $A^{\eslow}$ & $0^{\slow}$ & $A^{\islow}$ & $0^{\slow}$\\
  $1$ & $b^{\eslow T}$& $0$ & $b^{\islow T}$ & $0$ \\
  \hline
      & $b^{\eslow T}$& $0$ & $b^{\islow T}$ & $0$
\end{tabular}
\end{center}
where $0^{\slow}\in \mathbb{R}^{\sslowt}$. Thus for the remainder of this paper, we let $A^{\esloweslow}, A^{\islowislow} \in \mathbb{R}^{\sslow \times \sslow}$ be the stiffly-accurate versions of the IMEX-ARK Butcher tables $A^{\eslow}$ and $A^{\islow}$, respectively.  We note that this extension of the tables to include the row of $b$ coefficients does not affect the order conditions of the original IMEX-ARK table, and thus all order conditions satisfied by the original IMEX-ARK tables remain unchanged.

With these IMEX-ARK tables in place, we derive order conditions for the IMEX-MRI-GARK coefficients $c^{\slow}$, $\Gamma^{\{k\}}$ and $\Omega^{\{k\}}$ by first expressing IMEX-MRI-GARK methods in GARK form, following similar derivations applied to other infinitesimal methods \cite{Bauer2019Extended,robertsCoupledMultirateInfinitesimal2020,Sandu2019,sextonRelaxedMultirateInfinitesimal2019}; thus we first identify the GARK tables $\Abf^{\{\sigma,\nu\}}$, $\bbf^{\sigma}$ and $\cbf^{\sigma}$ for $\sigma,\nu\in\{I,E,F\}$.  To this end, we consider the inner modified fast IVP \eqref{eq:b} to be evolved using a single step of an arbitrary $\sfast$-stage Runge--Kutta method with Butcher table $(A^{\{F,F\}}, b^{\fast}, c^{\fast})$, having order of accuracy $q$ at least as accurate as the IMEX-MRI-GARK method. Thus the $k^{th}$ fast stage ($k = 1,\ldots, \sfast$) within the $i^{th}$ slow stage ($i=2,\ldots,\sslow$) is given by:
\begin{align}
 \label{eq:faststage}
    Y_k^{\fasti} = Y_{i-1}^{\slow} &+ H\Delta c_i^{\slow} \sum_{l=1}^{\sfast} a_{k,l}^{\{F,F\}}\, \ffasti_l\\
    &+ H \sum_{j=1}^{i}\left(\sum_{l=1}^{\sfast} a_{k,l}^{\{F,F\}} \gamma_{i,j}\!\left(c_l^{\fast}\right) \right) \fislow_j \notag \\
    &+ H \sum_{j=1}^{i-1}\left(\sum_{l=1}^{\sfast} a_{k,l}^{\{F,F\}} \omega_{i,j}\!\left(c_l^{\fast}\right) \right) \feslow_j, \notag
\end{align}
where $\ffasti_l := \ffast\!\left(T_{i-1} + c_l^{\fast} \Delta c_{i}^{\slow} H, Y_l^{\fasti}\right)$.  Similarly, the slow stages in this scenario become:
\begin{align}
\label{eq:yib4simplify}
    Y_{i}^{\slow} = Y_{i-1}^{\slow}
    &+ H \sum_{j=1}^{i}\left(\sum_{l=1}^{\sfast} b_{l}^{\fast} \gamma_{i,j}\!\left(c_l^{\fast}\right) \right) \fislow_j \\
    & + H \sum_{j=1}^{i-1}\left(\sum_{l=1}^{\sfast} b_{l}^{\fast} \omega_{i,j}\!\left(c_l^{\fast}\right) \right) \feslow_j \notag\\
    & + H\Delta c_{i}^{\slow} \sum_{l=1}^{\sfast} b_{l}^{\fast} \ffasti_l. \notag\\
    = y_n
     &+ H \sum_{\lambda = 1}^{i}  \sum_{j=1}^{\lambda}\left(\sum_{l=1}^{\sfast} \sum_{k=0}^{k_{\max}} \gamma_{\lambda,j}^{\{k\}} b_{l}^{\fast}c_l^{\fast \times k} \right) \fislow_j \notag \\
     &+ H \sum_{\lambda = 1}^{i}  \sum_{j=1}^{\lambda-1}\left(\sum_{l=1}^{\sfast} \sum_{k=0}^{k_{\max}} \omega_{\lambda,j}^{\{k\}} b_{l}^{\fast}c_l^{\fast \times k} \right) \feslow_j, \notag \\
    &+ H \sum_{\lambda = 1}^{i} \Delta c_{\lambda}^{\slow} \sum_{l=1}^{\sfast} b_{l}^{\fast} f_l^{\{F,\lambda\}},   \notag
\end{align}
due to \eqref{eq:gammacoeffs1}, and where we use the notation $c^{\times k}$ to indicate element-wise exponentiation.  Then using \eqref{eq:gammabarcoeffs} and our assumption that Runge--Kutta method for the fast partition satisfies $b^{\fast T}c^{\fast \times k} = 1/(k+1)$ for $k = 1,\ldots,q$, we simplify \eqref{eq:yib4simplify} to obtain:
\begin{align}
\label{eq:yslowstage}
  Y_{i}^{\slow} &= y_n + H \sum_{j = 1}^{i}  \sum_{\lambda = j}^{i}\overline{\gamma}_{\lambda,j} \fislow_j
     + H \sum_{j = 1}^{i-1}  \sum_{\lambda=j}^{i} \overline{\omega}_{\lambda,j} \feslow_j \\
     & \quad + H \sum_{\lambda = 1}^{i} \sum_{l=1}^{\sfast} \Delta c_{\lambda}^{\slow} b_{l}^{\fast} f_l^{\{F,\lambda\}}, \notag
\end{align}
where
\begin{equation}
    \label{eq:gammabarcoeffs}
    \overline{\gamma}_{i,j} \coloneqq \sum_{k=0}^{k_{\max}} \gamma_{i,j}^{\{k\}} \frac{1}{k+1}
    \quad\text{and}\quad
    \overline{\omega}_{i,j} \coloneqq \sum_{k=0}^{k_{\max}} \omega_{i,j}^{\{k\}} \frac{1}{k+1}.
\end{equation}
Recalling that the original IMEX-ARK method had an explicit first stage, \eqref{eq:yslowstage} is equivalent to the standard GARK formulation,
\begin{align}
\label{eq:yslowstage_GARK}
    Y_{i}^{\slow} &= y_n + H \sum_{j = 1}^{i}  a_{i,j}^{\islowislow} \fislow_j
    + H \sum_{j = 1}^{i-1} a_{i,j}^{\esloweslow}  \feslow_j
    + H \sum_{\lambda = 1}^{i} \sum_{j=1}^{\sfast} a_{i,j}^{\{S,F,\lambda\}} f_j^{\{F,\lambda\}},
\end{align}
%---------------------------------------------------------------------------------------
% Slow coefficient definitions
%--------------------------------------------------------------------------------------
for slow stages $i=1,\ldots,\sslow$, where we identify the slow implicit, slow explicit and slow-fast coupling coefficients as:
\begin{align}
\label{eq:GARK_coefficients_componentform}
    a_{i,j}^{\islowislow} := \sum_{\lambda=j}^i\overline{\gamma}_{\lambda,j}, \quad
    a_{i,j}^{\esloweslow} := \sum_{\lambda=j}^{i}\overline{\omega}_{\lambda,j}, \quad
    a_{i,j}^{\{S,F,\lambda\}} := \Delta c_{\lambda}^{\slow} b_{j}^{\fast}.
\end{align}
The first two of these may be represented as the GARK tables
\begin{align}
  \label{eq:AslowGARK}
  \Abf^{\islowislow} := E\overline{\Gamma} = A^{\islowislow} \quad\text{and}\quad
  \Abf^{\esloweslow} := E\overline{\Omega} = A^{\esloweslow},
\end{align}
where
\begin{equation*}
    E \in \mathbb{R}^{\sslow \times \sslow}, \quad
    E_{i,j} := \begin{cases} 1, & i\geq j, \\
                            0, & \text{otherwise}.\end{cases}
\end{equation*}
We note that
%due to our assumptions on the underlying IMEX-ARK tables, $\overline{\Gamma}$ is lower-triangular and $\overline{\Omega}$ is strictly lower-triangular, with both having zero first row. Additionally, we note that
the conditions $E\overline{\Gamma} = A^{\islowislow}$  and $E\overline{\Omega} = A^{\esloweslow}$ in \eqref{eq:AslowGARK} ensure consistency between the IMEX-MRI-GARK method \eqref{mriimexs} and the underlying IMEX-ARK method in the non-multirate case where $\ffast \equiv 0$.

Furthermore, since the GARK formulation of standard IMEX-ARK methods satisfies $A^{\isloweslow} = A^{\esloweslow}$ and $A^{\eslowislow} = A^{\islowislow}$ (see \cite{sanduGeneralizedStructureApproachAdditive2015}), the GARK formulation of our IMEX-MRI-GARK method results in the slow explicit and slow implicit portions having \textit{shared} slow-fast coupling matrix $\Abf^{\eslowfast} = \Abf^{\islowfast} := \Abf^{\slowfast} \in \mathbb{R}^{\sslow\times s}$ with $s = \sfast\sslow$.  From \eqref{eq:GARK_coefficients_componentform}, we have the sub-matrices
\begin{equation}
  \Abf^{\{S,F,\lambda\}} := \Delta c^{\slow}_{\lambda}\, \mathbf{g}_{\lambda} \,b^{\fast T}, \quad\text{for}\quad \lambda = 1, \ldots, \sslow,
\end{equation}
where $\mathbf{g}_{\lambda} \in \mathbb{R}^{\sslow}$ with
\begin{align*}
    \Big(\mathbf{g}_{\lambda}\Big)_i := \begin{cases} 1, & i\geq \lambda,\\
        0, & \text{otherwise}. \end{cases}
\end{align*}
Combining these into an overall slow-fast coupling matrix, we have
\begin{align}
    \Abf^{\slowfast} &:= \begin{bmatrix} \Abf^{\{S,F,1\}}, & \cdots, & \Abf^{\{S,F,\sslow\}} \end{bmatrix} = \Delta C^{\slow} \otimes b^{\fast T},
\end{align}  where
\begin{equation*}
    \Delta C^{\slow} := \begin{bmatrix}
    \Delta c_1^{\slow} & 0^{\fast T} & \cdots & 0^{\fast T} \\
    \Delta c_1^{\slow} & \Delta c_2^{\slow} & \cdots & 0^{\fast T} \\
    \vdots & \vdots & \ddots & 0^{\fast T} \\
    \Delta c_1^{\slow} & \Delta c_2^{\slow} & \cdots & \Delta c_{\sslow}^{\slow}
    \end{bmatrix},
\end{equation*}
and $0^{\fast}$ is a column vector of all zeros in $\mathbb{R}^{\sfast}.$

For completeness, we note the corresponding GARK slow implicit and slow explicit coefficients \cite{Sandu2019},
\begin{align}
\label{eq:bislow}
  \bbf^{\islow} &:= \mathbbm{1}^{\slow T} \overline{\Gamma} = b^{\islow},\\
\label{eq:cislow}
  \cbf^{\islow} &:= E\overline{\Gamma}\mathbbm{1}^{\slow} = A^{\islowislow}\mathbbm{1}^{\slow} = c^{\slow},\\
\label{eq:beslow}
  \bbf^{\eslow} &:= \mathbbm{1}^{\slow T} \overline{\Omega} = b^{\eslow},\\
\label{eq:ceslow}
  \cbf^{\eslow} &:= E\overline{\Omega}\mathbbm{1}^{\slow} = A^{\esloweslow}\mathbbm{1}^{\slow} = c^{\slow},
\end{align}
where $\mathbbm{1}^{\slow} \in \mathbb{R}^{\sslow}$ is a column vector of all ones, and we have relied on our assumption of internal consistency in the underlying IMEX-ARK method.  From enforcing the row-sum conditions on $\Abf^{\slowfast}$, we have
\begin{align}
\label{eq:cslowfast}
  \cbf^{\slowfast} &:= \sum_{\lambda=1}^{\sslow} \Abf^{\{S,F,\lambda\}} \mathbbm{1}^{\fast} = \sum_{\lambda = 1}^{\sslow} \Delta c_{\lambda}\mathbf{g}_{\lambda}\\
\Rightarrow\qquad\qquad&\notag\\
 \cbf_i^{\slowfast} &= \sum_{\lambda = 1}^{\sslow} (c_{\lambda}^{\slow} - c_{\lambda -1}^{\slow})(\mathbf{g}_{\lambda})_i = \sum_{\lambda = 1}^{i} (c_{\lambda}^{\slow} - c_{\lambda -1}^{\slow}) = c_i^{\slow}, \notag
\end{align}
which ensures internal consistency between each partition of the GARK table (i.e., $\cbf^{\islowislow} = \cbf^{\esloweslow} = \cbf^{\slowfast} = c^{\slow}$).

%---------------------------------------------------------------------------------------
% Fast method coefficients
%--------------------------------------------------------------------------------------
To reveal the GARK coefficients for the fast method and fast-slow couplings, we insert \eqref{eq:yslowstage} into \eqref{eq:faststage} to write the $k^{th}$ fast stage $(k=1,\ldots,\sfast)$ within the $i^{th}$ slow stage $(i=2,\ldots,\sslow)$ as:
\begin{align}
    Y_k^{\fasti} &= y_n + H\sum_{\lambda = 1}^{i-1} \sum_{l=1}^{\sfast} \Delta c_{\lambda}^{\slow} b_l^{\fast} f_{l}^{\{F,\lambda\}}
    + H \Delta c_{i}^{\slow} \sum_{l =1}^{\sfast} a_{k,l}^{\fastfast} \ffasti_l \\
   \notag &\quad + H \sum_{j =1}^{i-1} a_{i-1,j}^{\islowislow} \fislow_j
    + H \sum_{j=1}^i \left(\sum_{l=1}^{\sfast} a_{k,l}^{\fastfast} \gamma_{i,j}\!\left(c_l^{\fast}\right) \fislow_j \right) \\
   \notag &\quad + H \sum_{j =1}^{i-2} a_{i-1,j}^{\esloweslow} \feslow_j
    + H \sum_{j=1}^{i-1} \left(\sum_{l=1}^{\sfast} a_{k,l}^{\fastfast} \omega_{i,j}\!\left(c_l^{\fast}\right) \feslow_j \right).
\end{align}
The fast method coefficients are therefore:
\begin{align}
  \label{eq:Afastfast}
     \Abf^{\fastfast} &:= \begin{bmatrix}
     \Delta c_{1}^{\slow} A^{\fastfast} & 0_{\sfast \times \sfast} & \cdots & 0_{\sfast \times \sfast}\\
     \Delta c_{1}^{\slow} \mathbbm{1}^{\fast} b^{\fast T} & \Delta c_{2}^{\slow} A^{\fastfast} & \cdots & \vdots \\
     \Delta c_{1}^{\slow} \mathbbm{1}^{\fast} b^{\fast T}  & \Delta c_{2}^{\slow} \mathbbm{1}^{\fast} b^{\fast T} & \cdots & \vdots \\
      \vdots & \vdots & \ddots & \vdots \\
      \Delta c_{1}^{\slow} \mathbbm{1}^{\fast} b^{\fast T} & \Delta c_{2}^{\slow} \mathbbm{1}^{\fast} b^{\fast T} & \cdots & \Delta c_{\sslow}^{\slow} A^{\fastfast}
     \end{bmatrix}\\
    &= \text{diag}\big(\Delta c^{\slow} \big) \otimes A^{\fastfast} + L \Delta C^{\slow}\otimes \mathbbm{1}^{\fast} b^{\fast T} \in \mathbb{R}^{s \times s}, \notag
\end{align}
where $\text{diag}\big(\Delta c^{\slow} \big)$ is the diagonal matrix obtained by taking $\Delta c^{\slow}$ as its diagonal entries, and where $L\in\R^{\sslow\times\sslow}$ has entries $L_{i,j} := \delta_{i,j+1}$;
similarly,
\begin{align}
   \label{eq:cfast}
   \cbf^{\fast} &:= \begin{bmatrix}
   \Delta c_1^{\slow} c^{\fast}\\ c_1^{\slow}\mathbbm{1}^{\fast} + \Delta c_{2}^{\slow} c^{\fast}\\ \vdots \\
   c_{\sslow-1}^{\slow}\mathbbm{1}^{\fast} + \Delta c_{\sslow}^{\slow} c^{\fast}     \end{bmatrix} = Lc^{\slow} \otimes \mathbbm{1}^{\fast} + \Delta c^{\slow} \otimes c^{\fast} \in \mathbb{R}^{s}\\
   \notag \text{and}\quad&\\
   \label{eq:bfast}
   \bbf^{\fast} &:= \begin{bmatrix} \Delta c_1^{\slow} b^{\fast}\\
   \vdots \\ \Delta c_{\sslow}^{\slow} b^{\fast} \end{bmatrix}  = \Delta c^{\slow} \otimes b^{\fast} \in \mathbb{R}^{s}.
\end{align}
%
%---------------------------------------------------------------------------------------
%
Finally, the fast implicit and fast explicit coupling coefficients are
 \begin{align}
  \Abf^{\fastislow} &:= \begin{bmatrix}
  0_{\sfast \times \sslow}\\
     \mathbbm{1}^{\fast} \big(e_1^TA^{\islowislow}\big) + \sum\limits_{k=0 }^{k_{\max}} \big(A^{\fastfast} c^{\fast \times k}\big)\big(e_2^T {\Gamma}^{\{k\}}\big)
     \\
     \vdots
     \\
        \mathbbm{1}^{\fast} \big(e_{\sslow-1}^TA^{\islowislow}\big) + \sum\limits_{k=0}^{k_{\max}} \big(A^{\fastfast} c^{\fast \times k}\big)\big(e_{\sslow}^T {\Gamma}^{\{k\}}\big)
     \end{bmatrix}\\
     &= LA^{\islowislow} \otimes \mathbbm{1}^{\fast} + \sum_{k=0}^{k_{\max}} \Gamma^{\{k\}} \otimes \big(A^{\fastfast} c^{\fast \times k}\big) \in \mathbb{R}^{s \times \sslow}  \notag\\
   \notag \text{and}\quad&\\
   \Abf^{\fasteslow} &:=    \begin{bmatrix}
  0_{\sfast \times \sslow}\\
     \mathbbm{1}^{\fast} \big(e_1^TA^{\esloweslow}\big) + \sum\limits_{k=0}^{k_{\max}} \big(A^{\fastfast} c^{\fast \times k}\big)\big(e_2^T {\Omega}^{\{k\}}\big)
     \\
     \vdots
     \\
        \mathbbm{1}^{\fast} \big(e_{\sslow-1}^TA^{\esloweslow}\big) + \sum\limits_{k=0}^{k_{\max}} \big(A^{\fastfast} c^{\fast \times k}\big)\big(e_{\sslow}^T {\Omega}^{\{k\}}\big)
     \end{bmatrix}\\
     &= LA^{\esloweslow} \otimes \mathbbm{1}^{\fast} + \sum_{k=0}^{k_{\max}} \Omega^{\{k\}} \otimes \big(A^{\fastfast} c^{\fast \times k}\big) \in \mathbb{R}^{s \times \sslow}, \notag
\end{align}
where we have leveraged the fact that $\Gamma^{\{k\}}$ and $\Omega^{\{k\}}$ have zero first row.  These give rise to
\begin{align}
     \cbf^{\fastislow} &:= Lc^{\slow} \otimes \mathbbm{1}^{\fast} + \sum_{k=0}^{k_{\max}} \Gamma^{\{k\}} \mathbbm{1}^{\slow} \otimes (A^{\fastfast} c^{\fast \times k}),\quad\text{and}\\
     \cbf^{\fasteslow} &:= Lc^{\slow} \otimes \mathbbm{1}^{\fast} + \sum_{k=0}^{k_{\max}} \Omega^{\{k\}} \mathbbm{1}^{\slow} \otimes (A^{\fastfast} c^{\fast \times k}).
 \end{align}

%--------------------------------------------------------------------------------------
 %\subsubsection{Internal Consistency}
 \begin{theorem}[Internal consistency conditions]
 \label{thm:internal_consistency}
 IMEX-MRI-GARK methods fulfill the ``internal consistency" conditions:
 \begin{align}
     \label{eq:cslowic} &\cbf^{\islowfast} = \cbf^{\eslowfast} = \cbf^{\slowfast} = \cbf^{\slow} \equiv c^{\slow}, \quad\text{and}\\
  \label{eq:cfastic} &  \cbf^{\fastislow} = \cbf^{\fasteslow} = \cbf^{\fast},
 \end{align}
 for any fast method if and only if the following conditions hold:
 \begin{equation}
 \label{eq:internalcons}
     \Gamma^{\{0\}} \mathbbm{1}^{\slow}  = \Omega^{\{0\}} \mathbbm{1}^{\slow} = \Delta c^{\slow} \quad \text{and} \quad \Gamma^{\{k\}} \mathbbm{1}^{\slow} = \Omega^{\{k\}} \mathbbm{1}^{\slow} = 0 \quad \forall k \geq 1.
 \end{equation}
 \end{theorem}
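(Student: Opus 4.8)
The plan is to dispatch the slow-stage abscissa conditions \eqref{eq:cslowic} immediately and to concentrate the argument on the fast-stage conditions \eqref{eq:cfastic}. Indeed, \eqref{eq:cslowfast} already establishes $\cbf^{\slowfast} = c^{\slow}$ with no hypothesis on the $\Gamma^{\{k\}}$ or $\Omega^{\{k\}}$, and because the slow-implicit and slow-explicit partitions share the coupling block $\Abf^{\islowfast} = \Abf^{\eslowfast} = \Abf^{\slowfast}$, their row sums coincide; hence $\cbf^{\islowfast} = \cbf^{\eslowfast} = \cbf^{\slowfast} = c^{\slow}$ unconditionally, so \eqref{eq:cslowic} holds for every fast method and carries no content for the equivalence. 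Everything therefore reduces to characterizing when $\cbf^{\fastislow} = \cbf^{\fasteslow} = \cbf^{\fast}$.

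First I would form the residual $\cbf^{\fastislow} - \cbf^{\fast}$. Using the fast Runge--Kutta internal-consistency identity $A^{\fastfast}\mathbbm{1}^{\fast} = c^{\fast} = A^{\fastfast} c^{\fast\times 0}$ to merge the $k=0$ contribution, the definition of $\cbf^{\fastislow}$ together with \eqref{eq:cfast} collapses (the shared $Lc^{\slow}\otimes\mathbbm{1}^{\fast}$ terms cancelling) to
\begin{equation*}
  \cbf^{\fastislow} - \cbf^{\fast} = \sum_{k=0}^{k_{\max}} p_k^{\Gamma} \otimes \big(A^{\fastfast} c^{\fast\times k}\big),
\end{equation*}
where $p_0^{\Gamma} := \Gamma^{\{0\}}\mathbbm{1}^{\slow} - \Delta c^{\slow}$ and $p_k^{\Gamma} := \Gamma^{\{k\}}\mathbbm{1}^{\slow}$ for $k \ge 1$, with the verbatim analogue for $\cbf^{\fasteslow} - \cbf^{\fast}$ upon replacing $\Gamma^{\{k\}}$ by $\Omega^{\{k\}}$ throughout. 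Sufficiency ($\Leftarrow$) is then immediate: under \eqref{eq:internalcons} every $p_k^{\Gamma}$ and $p_k^{\Omega}$ vanishes, so both residuals are zero for any choice of fast method.

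The heart of the proof, and the step I expect to be the main obstacle, is necessity ($\Rightarrow$). Writing $w_k := A^{\fastfast}c^{\fast\times k}$, the $i$-th slow block of the (vanishing) residual reads $\sum_{k=0}^{k_{\max}} \big(p_k^{\Gamma}\big)_i\, w_k = 0$ for each $i = 1,\ldots,\sslow$. Because internal consistency is assumed to hold for \emph{every} admissible fast method, I am free to select one that makes the $w_k$ linearly independent: a collocation scheme (e.g.\ Gauss) with at least $k_{\max}+1$ distinct abscissae and order at least $q$ has nonsingular $A^{\fastfast}$ and, by a Vandermonde argument, linearly independent monomial vectors $c^{\fast\times 0},\ldots,c^{\fast\times k_{\max}}$, so the $w_k$ are linearly independent. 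For such a method each block equation forces $\big(p_k^{\Gamma}\big)_i = 0$ for all $i$ and $k$, i.e.\ $p_k^{\Gamma} = 0$; unpacking gives exactly $\Gamma^{\{0\}}\mathbbm{1}^{\slow} = \Delta c^{\slow}$ and $\Gamma^{\{k\}}\mathbbm{1}^{\slow} = 0$ for $k \ge 1$, and repeating with $\Omega^{\{k\}}$ and $\cbf^{\fasteslow}$ supplies the remaining half of \eqref{eq:internalcons}.

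The only genuinely delicate point is this existence claim: that a \emph{single} valid fast method can simultaneously meet the standing order requirement and render $\{A^{\fastfast}c^{\fast\times k}\}_{k=0}^{k_{\max}}$ linearly independent, since it is precisely the ``for any fast method'' quantifier that licenses this choice and thereby decouples the coefficient vectors $p_k^{\Gamma}$. Once that is secured the rest is elementary linear algebra, so I would take care to exhibit such a method explicitly (any sufficiently high-stage Gauss or Radau scheme suffices) and to confirm it lies in the admissible class.
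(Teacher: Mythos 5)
Your proposal is correct, and its skeleton is the same as the paper's: dispatch \eqref{eq:cslowic} via \eqref{eq:cslowfast} and the shared coupling block $\Abf^{\islowfast}=\Abf^{\eslowfast}=\Abf^{\slowfast}$, then reduce \eqref{eq:cfastic} to the Kronecker-product identity $\sum_{k}\Gamma^{\{k\}}\mathbbm{1}^{\slow}\otimes\big(A^{\fastfast}c^{\fast\times k}\big)=\Delta c^{\slow}\otimes c^{\fast}$ (and its $\Omega$ analogue), the common term $Lc^{\slow}\otimes\mathbbm{1}^{\fast}$ cancelling. Where you go beyond the paper is the necessity direction: the paper simply declares this identity ``equivalent to the conditions \eqref{eq:internalcons},'' which is immediate only if one may match Kronecker blocks term by term; strictly, for a \emph{fixed} fast method the vectors $w_k=A^{\fastfast}c^{\fast\times k}$ could be linearly dependent (e.g.\ $\sfast\le k_{\max}$), so the implication genuinely needs the ``for any fast method'' quantifier. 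Your move—writing the residual as $\sum_k p_k\otimes w_k$ with $p_0=\Gamma^{\{0\}}\mathbbm{1}^{\slow}-\Delta c^{\slow}$ (via the row-sum identity $A^{\fastfast}\mathbbm{1}^{\fast}=c^{\fast}$), $p_k=\Gamma^{\{k\}}\mathbbm{1}^{\slow}$ for $k\ge1$, and then exhibiting one admissible fast method (an $s$-stage Gauss scheme with $s\ge\max(\lceil q/2\rceil,\,k_{\max}+1)$, whose $A^{\fastfast}$ is nonsingular and whose distinct abscissae make $c^{\fast\times 0},\ldots,c^{\fast\times k_{\max}}$ linearly independent by Vandermonde) that forces each $p_k=0$—is exactly the justification the paper elides. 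So your argument buys a rigorous proof of the ``only if'' half at the cost of one explicit construction, while the paper's version is shorter but leaves that quantifier-dependent step implicit; the sufficiency halves are identical.
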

 \begin{proof}
 From the definition of $\cbf^{\slowfast}$ in equation \eqref{eq:cslowfast}, we have already shown that \eqref{eq:cslowic} is satisfied. Now
 \begin{align*}
   & \cbf^{\fastislow} = \cbf^{\fast} \Leftrightarrow \\
  & Lc^{\slow} \otimes \mathbbm{1}^{\fast} + \sum_{k=0}^{k_{\max}} \Gamma^{\{k\}} \mathbbm{1}^{\slow} \otimes (A^{\fastfast} c^{\fast \times k}) = Lc^{\slow} \otimes \mathbbm{1}^{\fast} + \Delta c^{\slow} \otimes c^{\fast},
 \end{align*}
 and similarly
  \begin{align*}
   & \cbf^{\fasteslow} = \cbf^{\fast} \Leftrightarrow \\
  & Lc^{\slow} \otimes \mathbbm{1}^{\fast} + \sum_{k=0}^{k_{\max}} \Omega^{\{k\}} \mathbbm{1}^{\slow} \otimes (A^{\fastfast} c^{\fast \times k}) = Lc^{\slow} \otimes \mathbbm{1}^{\fast} + \Delta c^{\slow} \otimes c^{\fast},
 \end{align*}
 which are equivalent to the conditions \eqref{eq:internalcons}.
 \end{proof}

 \subsubsection{IMEX-MRI-GARK Order Conditions}

 Due to the structure of the IMEX-MRI-GARK method \eqref{mriimexs}, many of the GARK order conditions are automatically satisfied.  As discussed in \cite{Sandu2019}, since $\Abf^{\islowislow} = A^{\islowislow}$, $\Abf^{\esloweslow} = A^{\esloweslow}$, $\bbf^{\islow} = b^{\islow}$, $\bbf^{\eslow} = b^{\eslow}$, $\cbf^{\islow} = c^{\slow}$, and $\cbf^{\eslow} = c^{\slow}$ from \eqref{eq:AslowGARK} and \eqref{eq:bislow}-\eqref{eq:ceslow}, and since our base IMEX-ARK method has order $q$, then all of the GARK order conditions up to order $q$ corresponding to only the ``slow'' components (and their couplings) will be satisfied.  Similarly, since `infinitesimal' methods assume that the fast component is solved exactly (or at least using an approximation of order $\geq q$), then the ``fast''  GARK order $q$ conditions will similarly be satisfied.  Additionally as discussed in \cite{sanduGeneralizedStructureApproachAdditive2015}, if all component tables have order at least two, then an IMEX-MRI-GARK method \eqref{mriimexs} that satisfies the internal consistency conditions from Theorem \ref{thm:internal_consistency} will be at least second order accurate.  Therefore, in this section we focus on only the remaining coupling conditions between the fast and slow components (both implicit and explicit) for orders three and four.

 %---------------------------------------------------------------------------------
 % REPRODUCED SIMPLIFYING FORMULAS

 We make use of the following simplifying conditions as listed in Lemma 3.8 of \cite{Sandu2019}, reproduced here in matrix form, taking into account the structure of our slow base IMEX-ARK method:
\begin{align}
    \label{eq:Asfcf} \Abf^{\slowfast} \cbf^{\fast} &= \frac{1}{2}c^{\slow \times 2},\\
    \label{eq:biasf} \bbf^{\islow T}\Abf^{\slowfast} &= \Big((\Delta c^{\slow} \times (Db^{\islow})) \otimes b^{\fast }\Big)^T,\\
    \label{eq:beasf} \bbf^{\eslow T}\Abf^{\slowfast} &= \Big((\Delta c^{\slow} \times (Db^{\eslow})) \otimes b^{\fast }\Big)^T,\\
    \label{eq:bfafi} \bbf^{\fast T}\Abf^{\fastislow} &= \Delta c^{\slow T} \mathcal{A}^{\{I,\zeta\}},\\
    \label{eq:bfafe} \bbf^{\fast T}\Abf^{\fasteslow} &= \Delta c^{\slow T} \mathcal{A}^{\{E,\zeta\}},\\
    \label{eq:Afics} \Abf^{\fastislow} \cbf^{\slow} &= \Bigg((LA^{\islowislow}) \otimes \mathbbm{1}^{\fast} + \sum_{k=0}^{k_{\max}} \Gamma^{\{k\}}  \otimes \Big(A^{\fastfast} c^{\fast \times k} \Big)\Bigg)c^{\slow},\\
    \label{eq:Afecs} \Abf^{\fasteslow} \cbf^{\slow} &= \Bigg((LA^{\esloweslow}) \otimes \mathbbm{1}^{\fast} + \sum_{k=0}^{k_{\max}} \Omega^{\{k\}}  \otimes \Big(A^{\fastfast} c^{\fast \times k} \Big)\Bigg)c^{\slow},\\
    \notag \text{and}\qquad\qquad&\\
    \label{eq:Affcf} \Abf^{\fastfast} \cbf^{\fast} &= \frac{1}{2} (Lc^{\slow})^{\times 2} \otimes \mathbbm{1}^{\fast} + \Big((Lc^{\slow}) \times \Delta c^{\slow}\Big) \otimes c^{\fast}\\
    & \qquad + \Delta c^{\slow \times 2} \otimes \Big(A^{\fastfast} c^{\fast} \Big)\notag,
\end{align}
where we use the notation $a\times b$ to indicate element-wise multiplication of two vectors, and where we define
\begin{align}
   \label{eq:Azeta}
   \mathcal{A}^{\{I,\zeta\}} &= LA^{\islowislow} + \sum_{k=0}^{k_{\max}} \zeta_{k} \Gamma^{\{k\}}, &
   \mathcal{A}^{\{E,\zeta\}} &= LA^{\esloweslow} + \sum_{k=0}^{k_{\max}} \zeta_{k} \Omega^{\{k\}},\\
   L_{i,j} &= \delta_{i,j+1}, & D_{i,j} &= \begin{cases} 1, & j \geq i, \\
     0, & \text{otherwise},
   \end{cases} \notag
\end{align}
and
\begin{equation}
\label{eq:zeta_k}
     \zeta_k = b^{\fast T} A^{\fastfast} c^{\fast \times k}.
\end{equation}
%Note, since the fast method is assumed to have order at least $q$, then $\zeta_k = \frac{1}{(k+1)(k+2)}$ for all $0\le k\le q-2$.
%\textcolor{blue}{Rujeko: please confirm}

%   %-----------------------------------------------------------------------------------

%---------------------------------------------------------------------------------------

\begin{theorem}[Third order conditions]
\label{thm:order3}
  An internally consistent IMEX-MRI-GARK method \eqref{mriimexs} has order three iff the base IMEX-ARK method has order at least three, and the coupling conditions
 \begin{equation}
    \label{eq:order3}
    \Delta c^{\slow T} \mathcal{A}^{\{I,\zeta\}} c^{\slow} = \frac{1}{6} \qquad\text{and}\qquad
    \Delta c^{\slow T} \mathcal{A}^{\{E,\zeta\}} c^{\slow} = \frac{1}{6}
 \end{equation}
 hold, where $\mathcal{A}^{\{I,\zeta\}}$ and $\mathcal{A}^{\{E,\zeta\}}$ are defined in equation \eqref{eq:Azeta}.
 \end{theorem}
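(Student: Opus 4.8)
The plan is to invoke the GARK order-condition machinery. Since the preceding derivation has recast every IMEX-MRI-GARK method in GARK form with tables $\Abf^{\{\sigma,\nu\}}$, $\bbf^{\sigma}$, $\cbf^{\sigma}$ for $\sigma,\nu\in\{I,E,F\}$, the method attains order three exactly when all GARK order conditions through order three hold. I would organize these by the two rooted trees of order three — the \emph{bushy} tree (a root with two leaf children) and the \emph{tall} tree (a path of length two) — each decorated with a partition on its root and on each edge, and reduce every instance using the internal consistency of Theorem~\ref{thm:internal_consistency}, which collapses each inner abscissa $A^{\{\sigma,\nu\}}\mathbbm{1}^{\slow}$ to $\cbf^{\sigma}$ irrespective of the edge coloring. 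The conditions through order two are already covered by internal consistency together with each component table having order at least two, as noted above, so only the order-three trees require attention.

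For the bushy tree, each condition reduces to $\bbf^{\sigma T}\bigl(\cbf^{\sigma}\times\cbf^{\sigma}\bigr)=\tfrac13$, independently of the two edge colorings. For $\sigma\in\{I,E\}$ this is one of the base IMEX-ARK order-three conditions, since $\bbf^{\islow}=b^{\islow}$, $\bbf^{\eslow}=b^{\eslow}$, and $\cbf^{\islow}=\cbf^{\eslow}=c^{\slow}$; for $\sigma=F$ it holds by the assumed order of the fast Runge--Kutta method. Hence no bushy condition is new. For the tall tree, the condition with root color $\sigma$ and first edge $\nu$ reduces to $\bbf^{\sigma T}\Abf^{\{\sigma,\nu\}}\cbf^{\nu}=\tfrac16$, independently of the lower edge. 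I would then run through the nine $(\sigma,\nu)$ pairs. The four pairs with $\sigma,\nu\in\{I,E\}$ are precisely the base method's tall-tree conditions, using $\Abf^{\islowislow}=A^{\islowislow}$, $\Abf^{\esloweslow}=A^{\esloweslow}$ and the IMEX-ARK identities $A^{\isloweslow}=A^{\esloweslow}$, $A^{\eslowislow}=A^{\islowislow}$; the pair $(F,F)$ is the fast method's tall-tree condition.

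This leaves the four genuinely mixed pairs. The two with a fast middle node, $(I,F)$ and $(E,F)$, reduce through the simplifying identity $\Abf^{\slowfast}\cbf^{\fast}=\tfrac12 c^{\slow\times 2}$ of \eqref{eq:Asfcf} (recall $\Abf^{\islowfast}=\Abf^{\eslowfast}=\Abf^{\slowfast}$) to $\tfrac12\bbf^{\islow T}c^{\slow\times 2}=\tfrac16$ and $\tfrac12\bbf^{\eslow T}c^{\slow\times 2}=\tfrac16$, which are again consequences of the base method's bushy order-three condition. The only genuinely new constraints therefore arise from the two remaining pairs $(F,I)$ and $(F,E)$, namely $\bbf^{\fast T}\Abf^{\fastislow}\cbf^{\slow}=\tfrac16$ and $\bbf^{\fast T}\Abf^{\fasteslow}\cbf^{\slow}=\tfrac16$; substituting the simplifying identities $\bbf^{\fast T}\Abf^{\fastislow}=\Delta c^{\slow T}\mathcal{A}^{\{I,\zeta\}}$ and $\bbf^{\fast T}\Abf^{\fasteslow}=\Delta c^{\slow T}\mathcal{A}^{\{E,\zeta\}}$ from \eqref{eq:bfafi}--\eqref{eq:bfafe} yields exactly the pair \eqref{eq:order3}.

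Assembling the ``iff'' is then routine. For sufficiency, the enumeration above shows that order three of the base IMEX-ARK method together with \eqref{eq:order3} makes every GARK condition through order three hold, giving order three. For necessity, order three forces all GARK order-three conditions; in particular the pure-slow ones, which by the equalities above coincide with the base IMEX-ARK order-three conditions, and the two coupling conditions \eqref{eq:order3} themselves. I expect the main obstacle to be the bookkeeping: correctly enumerating the tree/partition conditions at order three and verifying that the simplifying identities \eqref{eq:Asfcf}, \eqref{eq:biasf}, \eqref{eq:beasf}, \eqref{eq:bfafi}, and \eqref{eq:bfafe} dispatch every coupling except the two claimed, so that \eqref{eq:order3} is both necessary and sufficient.
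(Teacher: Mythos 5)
Your proposal is correct and takes essentially the same route as the paper: after reducing to the GARK order conditions and discharging the slow-only, fast-only, and order-$\le 2$ conditions via internal consistency and the assumed accuracy of the component methods, both arguments dispatch the couplings $\bbf^{\{\sigma\} T}\Abf^{\slowfast}\cbf^{\fast}$ with \eqref{eq:Asfcf} and reduce the only genuinely new pair $\bbf^{\fast T}\Abf^{\{F,\sigma\}}\cbf^{\slow}=\tfrac16$ to \eqref{eq:order3} via \eqref{eq:bfafi}--\eqref{eq:bfafe}. Your explicit tree-by-tree enumeration of the nine $(\sigma,\nu)$ tall-tree cases simply makes concrete the bookkeeping the paper delegates to the discussion preceding its proof.
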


\begin{proof}
Using \eqref{eq:Asfcf}, we have that
\begin{equation*}
  \bbf^{\{\sigma\} T} \Abf^{\slowfast} \cbf^{\fast} = \frac12 b^{\slow T} c^{\slow \times 2} = \frac{1}{2}\Big(\frac{1}{3}\Big)
\end{equation*}
for $\sigma\in\{I,E\}$, and thus two of the third order GARK conditions are automatically satisfied.  Similarly, from \eqref{eq:bfafi} and \eqref{eq:bfafe} we have
\begin{align*}
   \bbf^{\fast T}\Abf^{\{F,\sigma\}}\cbf^{\slow} = \Delta c^{\slow T} \mathcal{A}^{\{\sigma,\zeta\}} c^{\slow},
\end{align*}
which result in the conditions \eqref{eq:order3}.
\end{proof}

\begin{theorem}[Fourth order conditions]
\label{thm:order4}
  An IMEX-MRI-GARK method \eqref{mriimexs} that satisfies Theorem \ref{thm:order3} has order four iff the base IMEX-ARK method has order at least four, and the following coupling conditions hold for $\sigma,\nu\in\{I,E\}$:
 \begin{subequations}
 \label{eq:order4}
 \begin{align}
    \label{eq:4a} \Big(\Delta c^{\slow} \times L c^{\slow}\Big)^T \mathcal{A}^{\{\sigma,\zeta\}} c^{\slow} + \Big(\Delta c^{\slow \times 2}\Big)^T \mathcal{A}^{\{\sigma,\beta\}} c^{\slow} &= \frac{1}{8},\\
    \label{eq:4b} \Delta c^{\slow T}\mathcal{A}^{\{\sigma,\zeta\}} c^{\slow \times 2} &= \frac{1}{12},\\
    \label{eq:4c} \Big(\Delta c^{\slow}\times (Db^{\{\sigma\})} \Big)^T\mathcal{A}^{\{\nu,\zeta\}} c^{\slow} &= \frac{1}{24},\\
    \label{eq:4d} \Big(\Delta c^{\slow \times 2}\Big)^T \mathcal{A}^{\{\sigma,\xi\}} c^{\slow} + \Delta c^{\slow T} L \Delta C^{\slow} \mathcal{A}^{\{\sigma,\zeta\}} c^{\slow} &= \frac{1}{24}, \quad\text{and}\\
    \label{eq:4e} \Delta c^{\slow T} \mathcal{A}^{\{\sigma,\zeta\}} A^{\{\nu,\nu\}} c^{\slow} &= \frac{1}{24},
 \end{align}
 \end{subequations}
 where we have defined the auxiliary variables
 \begin{align}
   \label{eq:AIbeta}
   \mathcal{A}^{\{I,\beta\}} &:= \frac{1}{2}LA^{\islowislow} +\sum_{k=0}^{k_{\max}} \beta_{k} \Gamma^{\{k\}},\\
   \label{eq:AEbeta}
   \mathcal{A}^{\{E,\beta\}} &:= \frac{1}{2}LA^{\esloweslow} +\sum_{k=0}^{k_{\max}} \beta_{k} \Omega^{\{k\}},\\
   \label{eq:AIxi}
   \mathcal{A}^{\{I,\xi\}} &:= \frac{1}{2}LA^{\islowislow} +\sum_{k=0}^{k_{\max}} \xi_{k} \Gamma^{\{k\}},\\
   \label{eq:AExi}
   \mathcal{A}^{\{E,\xi\}} &:= \frac{1}{2}LA^{\esloweslow} +\sum_{k=0}^{k_{\max}} \xi_{k} \Omega^{\{k\}},\\
   \label{eq:beta_k}
   %\beta_k &:= (b^{\fast} \times c^{\fast})^T A^{\fastfast} c^{\fast \times k} = \frac{1}{(k+1)(k+3)}, \quad\text{and}\\
   \beta_k &:= (b^{\fast} \times c^{\fast})^T A^{\fastfast} c^{\fast \times k}, \quad\text{and}\\
   \label{eq:xi_k}
   %\xi_k &:= b^{\fast T} A^{\fastfast} A^{\fastfast} c^{\fast \times k} = \frac{1}{(k+1)(k+2)(k+3)}.
   \xi_k &:= b^{\fast T} A^{\fastfast} A^{\fastfast} c^{\fast \times k}.
 \end{align}
% \textcolor{blue}{Rujeko: I'm not sure how to update the above two lines similarly to how I updated $\zeta_k$ earlier.  We \underline{define} $\beta_k$ and $\xi_k$ using those matrix/vector products (not as the fractions to the right).  They only equal those simplified fractions for certain values of $k$ (I believe that here it would be $1\le k\le q-3$).  I don't want the referees/readers to think that we're adding on assumptions, but I do like that these constants can be simplified down based on the order of the fast method, so I like that you added them in, I just think that we should be careful with how this is presented.  How do you think it would work best?}
 \end{theorem}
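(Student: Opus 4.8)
The plan is to invoke the general theory of order conditions for GARK methods \cite{sanduGeneralizedStructureApproachAdditive2015}, under which the method has order four if and only if, for every rooted tree of order at most four, the associated \emph{colored} order condition holds --- one condition for each assignment of the partition labels $\{I,E,F\}$ to the nodes of the tree. Since the method is assumed to satisfy Theorem \ref{thm:order3} and both the base IMEX-ARK method and the fast Runge--Kutta method are taken to have order at least four, I would first dispose of all colorings that are not genuinely coupled. Any coloring using only the slow labels $\{I,E\}$ reduces, via \eqref{eq:AslowGARK} and \eqref{eq:bislow}--\eqref{eq:ceslow}, to an order condition of the base IMEX-ARK method; any coloring using only the label $F$ reduces to an order condition of the fast method; and all trees of order at most three are covered by Theorem \ref{thm:order3}. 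The single order-four ``bushy'' tree whose scalar condition is $b^{T}c^{\times 3}=\tfrac14$ carries no internal coupling matrix, so by internal consistency (Theorem \ref{thm:internal_consistency}) and the fast quadrature property $b^{\fast T}c^{\fast\times k}=1/(k+1)$ it too collapses onto conditions already implied by the base and fast methods. What remains are the coupled colorings of the three order-four trees whose scalar conditions read $b^{T}(c\times Ac)=\tfrac18$, $b^{T}Ac^{\times 2}=\tfrac1{12}$, and $b^{T}AAc=\tfrac1{24}$, and it is precisely these that I must show equivalent to \eqref{eq:4a}--\eqref{eq:4e}.

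For the tree $b^{T}(c\times Ac)$ I would take the root color to be fast, writing the coupled condition as $\bbf^{\fast T}\big(\cbf^{\fast}\times(\Abf^{\{F,\sigma\}}c^{\slow})\big)=\tfrac18$, and then expand $\cbf^{\fast}=Lc^{\slow}\otimes\mathbbm{1}^{\fast}+\Delta c^{\slow}\otimes c^{\fast}$ from \eqref{eq:cfast} together with $\bbf^{\fast}=\Delta c^{\slow}\otimes b^{\fast}$. The first summand of $\cbf^{\fast}$ contracts through $b^{\fast T}A^{\fastfast}c^{\fast\times k}=\zeta_k$ and yields $(\Delta c^{\slow}\times Lc^{\slow})^{T}\mathcal{A}^{\{\sigma,\zeta\}}c^{\slow}$, while the second summand contracts through $(b^{\fast}\times c^{\fast})^{T}A^{\fastfast}c^{\fast\times k}=\beta_k$ of \eqref{eq:beta_k} (with $b^{\fast T}c^{\fast}=\tfrac12$ producing the $\tfrac12 LA^{\islowislow}$ part of \eqref{eq:AIbeta}) and yields $(\Delta c^{\slow\times 2})^{T}\mathcal{A}^{\{\sigma,\beta\}}c^{\slow}$; summing gives exactly \eqref{eq:4a}. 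The tree $b^{T}Ac^{\times 2}$ reduces immediately through \eqref{eq:bfafi}--\eqref{eq:bfafe} to \eqref{eq:4b}.

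The tall tree $b^{T}AAc$ is the most involved, and I expect its coupled colorings to organize into the three conditions \eqref{eq:4c}, \eqref{eq:4d}, and \eqref{eq:4e} according to where the fast-to-slow transition occurs. A fast root followed by a slow internal node and a slow-slow edge $A^{\{\nu,\nu\}}c^{\slow}$ gives $\bbf^{\fast T}\Abf^{\{F,\sigma\}}A^{\{\nu,\nu\}}c^{\slow}$, which by \eqref{eq:bfafi}--\eqref{eq:bfafe} is \eqref{eq:4e}. A fast root followed by a fast internal node requires expanding $\Abf^{\fastfast}$ via its Kronecker form \eqref{eq:Afastfast}: the diagonal block $\diag(\Delta c^{\slow})\otimes A^{\fastfast}$ produces the auxiliary coefficient $\xi_k$ of \eqref{eq:xi_k} (and the $\tfrac12 LA^{\islowislow}$ part of \eqref{eq:AIxi}), while the off-diagonal block $L\Delta C^{\slow}\otimes\mathbbm{1}^{\fast}b^{\fast T}$ produces the $L\Delta C^{\slow}$ term, together giving \eqref{eq:4d}. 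Finally, the coloring with a slow root, whose slow-to-fast coupling $\bbf^{\{\sigma\} T}\Abf^{\slowfast}$ is reduced through \eqref{eq:biasf}--\eqref{eq:beasf}, is the one surviving slow-root condition and produces the $Db^{\{\sigma\}}$-weighted form \eqref{eq:4c}. Throughout, the simplifying relations \eqref{eq:Asfcf}--\eqref{eq:Affcf} are exactly what permit replacing the Kronecker-structured GARK blocks by the compact operators $\mathcal{A}^{\{\sigma,\zeta\}}$, $\mathcal{A}^{\{\sigma,\beta\}}$, and $\mathcal{A}^{\{\sigma,\xi\}}$.

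I expect the main obstacle to be organizational rather than conceptual: the faithful enumeration of the coupled colorings of the three trees and the verification that each collapses to exactly one of \eqref{eq:4a}--\eqref{eq:4e}, with no omissions or duplicates across the choices $\sigma,\nu\in\{I,E\}$. The delicate algebraic step is the repeated separation of $\cbf^{\fast}$ and $\Abf^{\fastfast}$ into their ``inter-stage'' parts (carrying $Lc^{\slow}$ and $L\Delta C^{\slow}$) and their ``intra-stage'' parts (carrying $c^{\fast}$ and $A^{\fastfast}$); it is precisely this separation, combined with the fast quadrature conditions, that spawns the paired terms of \eqref{eq:4a} and \eqref{eq:4d} and the three distinct auxiliary families $\zeta_k,\beta_k,\xi_k$. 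The ``only if'' direction then follows because, once the base-method and fast-method conditions are fixed, the coupled GARK conditions are mutually independent, so each reduced equality in \eqref{eq:4a}--\eqref{eq:4e} is individually necessary.
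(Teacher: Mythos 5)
Your route is the same as the paper's: reduce to the order-four GARK coupling conditions, discard single-color and lower-order colorings using the assumed order of the base IMEX-ARK and fast methods together with Theorem \ref{thm:order3}, and collapse the surviving coupled conditions to \eqref{eq:4a}--\eqref{eq:4e} via the simplifying formulas \eqref{eq:Asfcf}--\eqref{eq:Affcf}. Your derivations of \eqref{eq:4a}, \eqref{eq:4b}, \eqref{eq:4c}, \eqref{eq:4d} and \eqref{eq:4e} --- including the split of $\cbf^{\fast}$ and $\Abf^{\fastfast}$ into inter-stage parts (carrying $Lc^{\slow}$, $L\Delta C^{\slow}$) and intra-stage parts (spawning $\zeta_k$, $\beta_k$, $\xi_k$) --- reproduce the paper's treatment of conditions \eqref{eq:bfcfAfscs}, \eqref{eq:bfAfscs2}, \eqref{eq:bsAsfAfscs}, \eqref{eq:bfAffAfscs} and \eqref{eq:bfAfsAsscs} essentially verbatim. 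However, there are two concrete gaps. First, your enumeration of the tall tree $b^{T}AAc$ omits the coloring fast root $\to$ slow internal node $\to$ fast leaf, i.e.\ $\bbf^{\fast T}\Abf^{\{F,\sigma\}}\Abf^{\slowfast}\cbf^{\fast}=\tfrac{1}{24}$; the paper shows via \eqref{eq:Asfcf} and \eqref{eq:bfafi}--\eqref{eq:bfafe} that this reduces to $\tfrac12\,\Delta c^{\slow T}\mathcal{A}^{\{\sigma,\zeta\}}c^{\slow\times 2}$, hence duplicates \eqref{eq:4b}. You are fortunate that it introduces no new condition, but the ``iff'' claim requires it to be accounted for (the paper's count is 26 coupled conditions: 10 automatic in \eqref{eq:order4_auto} and 16 remaining in \eqref{eq:order4_other}), and as written your list is incomplete.

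Second, you assert but never verify the automatic satisfaction of the slow-rooted coupled conditions. Two of these are not consequences of internal consistency alone: $\bbf^{\{\sigma\}T}\Abf^{\slowfast}\cbf^{\fast\times 2}=\tfrac{1}{12}$ and $\bbf^{\{\sigma\}T}\Abf^{\slowfast}\Abf^{\fastfast}\cbf^{\fast}=\tfrac{1}{24}$ require expanding $\cbf^{\fast}$ (respectively $\Abf^{\fastfast}\cbf^{\fast}$ via \eqref{eq:Affcf}), invoking fast quadrature through order three, and the telescoping identity
\begin{equation*}
\big(Db^{\{\sigma\}}\big)^T\Big(c^{\slow\times 3}-(Lc^{\slow})^{\times 3}\Big)=b^{\{\sigma\}T}c^{\slow\times 3},
\end{equation*}
before the fourth-order accuracy of the base IMEX-ARK method can be applied; these verifications are needed for the ``if'' direction and constitute a nontrivial portion of the paper's proof. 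Finally, your closing appeal to ``mutual independence'' of the coupled conditions for the ``only if'' direction is unnecessary and imprecise: necessity follows simply because every reduction above is an equivalence, so each GARK condition holds if and only if its reduced form in \eqref{eq:order4} does.
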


\begin{proof}
Since the GARK representation of our IMEX-MRI-GARK method is internally consistent, there are 26 coupling conditions of order 4.  Of these, ten are automatically satisfied due the IMEX-MRI-GARK method structure and our assumed accuracy of the base IMEX-ARK method: for $\sigma,\nu\in\{I,E\}$,
\begin{subequations}
\label{eq:order4_auto}
\begin{align}
  \label{eq:bscsAsfcf}
  \Big(\bbf^{\{\sigma \}} \times \cbf^{\slow } \Big)^{T} \Abf^{\slowfast} \cbf^{\fast} &=
  \frac18,\\
\label{eq:bsAssAsfcf}
    \bbf^{\{\sigma\} T}\Abf^{\{\nu,\nu\}}\Abf^{\slowfast}\cbf^{\fast} &=
    \frac{1}{24},\\
  \label{eq:bsAsfcf2}
  \bbf^{\{\sigma\} T}\Abf^{\slowfast} \cbf^{\fast \times 2} &=
    \frac{1}{12}, \quad\text{and}\\
  \label{eq:bsAsfAffcf}
    \bbf^{\{\sigma\} T}\Abf^{\slowfast}\Abf^{\fastfast}\cbf^{\fast} &=
    \frac{1}{24}.
\end{align}
\end{subequations}
The remaining 16 coupling conditions are
\begin{subequations}
\label{eq:order4_other}
\begin{align}
  \label{eq:bfcfAfscs}
    \Big(\bbf^{\fast } \times \cbf^{\fast } \Big)^{T} \Abf^{\{F,\sigma\}} \cbf^{\slow} &= \frac18,\\
  \label{eq:bfAfscs2}
    \bbf^{\fast T} \Abf^{\{F,\sigma\}} c^{\slow \times 2} &= \frac{1}{12},\\
  \label{eq:bsAsfAfscs}
    \bbf^{\{\sigma\} T} \Abf^{\slowfast} \Abf^{\{F,\nu\}}\cbf^{\slow} &= \frac{1}{24},\\
  \label{eq:bfAffAfscs}
    \bbf^{\fast T} \Abf^{\fastfast} \Abf^{\{F,\sigma\}} \cbf^{\slow} &= \frac{1}{24},\\
  \label{eq:bfAfsAsscs}
    \bbf^{\fast T} \Abf^{\{F,\sigma\}} \Abf^{\{\nu,\nu\}}\cbf^{\slow} &= \frac{1}{24},\quad\text{and}\\
  \label{eq:bfAfsAsfcf}
    \bbf^{\fast T}\Abf^{\{F,\sigma\}}\Abf^{\slowfast}\cbf^{\fast} &= \frac{1}{24},
\end{align}
\end{subequations}
where again $\sigma,\nu\in\{I,E\}$.

We first prove the automatically-satisfied conditions \eqref{eq:order4_auto}.
Using \eqref{eq:Asfcf} and our assumption that the base IMEX-ARK method is order four,
\begin{equation*}
  \Big(\bbf^{\{\sigma \}} \times \cbf^{\slow } \Big)^{T} \Abf^{\slowfast} \cbf^{\fast} = \frac12 b^{\{\sigma\} T} c^{\slow \times 3}  = \frac{1}{2}\Big(\frac{1}{4} \Big)
\end{equation*}
and
\begin{equation*}
    \bbf^{\{\sigma\} T}\Abf^{\{\nu,\nu\}}\Abf^{\slowfast}\cbf^{\fast} = \frac12 b^{\{\sigma\} T}A^{\{\nu,\nu\}}c^{\slow \times 2} = \frac{1}{2} \Big(\frac{1}{12} \Big),
\end{equation*}
for $\sigma,\nu\in\{I,E\}$, and hence \eqref{eq:bscsAsfcf} and \eqref{eq:bsAssAsfcf} are satisfied.
Using the definition of $\cbf^{\fast}$ from \eqref{eq:cfast}, the simplifying formulas \eqref{eq:biasf}-\eqref{eq:beasf}, and our assumptions that $c^{\slow}_1=0$, the fast method is at least third order, and the IMEX-ARK method is at least fourth order, we have for $\sigma\in\{I,E\}$:
\begin{align*}
  &\bbf^{\{\sigma\} T}\Abf^{\slowfast} \cbf^{\fast \times 2} \\
  &= \left((\Delta c^{\slow} \times (Db^{\{\sigma\}})) \otimes b^{\fast }\right)^T  \left(Lc^{\slow} \otimes \mathbbm{1}^{
  \fast}+ \Delta c^{\slow} \otimes c^{\fast} \right)^{\times 2} \\
  &= \left(\Delta c^{\slow} \times (Db^{\{\sigma\})}\right)^T \left((Lc^{\slow})^{\times 2} + (Lc^{\slow} \times \Delta c^{\slow})  + \frac{1}{3}\Delta c^{\slow \times 2} \right)\\
  &= \left(D b^{\{\sigma\}} \right)^T \left((Lc^{\slow})^{\times 2}\times \Delta c^{\slow} + Lc^{\slow} \times \Delta c^{\slow \times 2}  + \frac{1}{3}\Delta c^{\slow \times 3} \right)\\
  &= \frac{1}{3} \left(D b^{\{\sigma\}} \Big)^T \Big(c^{\slow \times 3} - (L c^{\slow})^{\times 3} \right)\\
  &= \frac{1}{3} \sum_{i=2}^{\sslow}\left(\sum_{l=i}^{\sslow} b_l^{\{\sigma\}}\right) \left(c^{\slow \times 3}_i - c^{\slow \times 3}_{i-1}\right) = \frac{1}{3} b^{\{\sigma\} T}c^{\slow \times 3} = \frac{1}{3}\left(\frac{1}{4} \right),
\end{align*}
which proves the coupling conditions \eqref{eq:bsAsfcf2}.
Using the simplifying formulas \eqref{eq:biasf}, \eqref{eq:beasf} and \eqref{eq:Affcf}, and the same assumptions as in the previous step, for $\sigma\in\{I,E\}$ we have
\begin{align*}
  &\bbf^{\{\sigma\} T} \Abf^{\slowfast} \Abf^{\fastfast} \cbf^{\fast} \\
  &= \left((\Delta c^{\slow} \times (Db^{\{\sigma\}})) \otimes b^{\fast }\right)^T \Bigg(\frac12 (Lc^{\slow})^{\times 2} \otimes \mathbbm{1}^{\fast} \\
  &\quad + \left((Lc^{\slow}) \times \Delta c^{\slow}\right) \otimes c^{\fast} + \Delta c^{\slow \times 2} \otimes \left(A^{\fastfast} c^{\fast} \right)\Bigg)\\
  &= \left(\Delta c^{\slow} \times (Db^{\{\sigma\})}\right)^T\left(\frac12 (Lc^{\slow})^{\times 2}  + \frac12 (Lc^{\slow}) \times \Delta c^{\slow} + \frac16 \Delta c^{\slow \times 2} \right)\\
  &= \frac16 \left(Db^{\{\sigma\}}\right)^T \left(c^{\slow \times 3}-(Lc^{\slow})^{\times 3} \right)
  = \frac16 b^{\{\sigma\} T}c^{\slow \times 3}
  = \frac16 \left(\frac14\right),
\end{align*}
and thus the coupling conditions \eqref{eq:bsAsfAffcf} are automatically satisfied as well.

We now examine the 16 remaining fourth-order GARK conditions \eqref{eq:order4_other}.  Starting with \eqref{eq:bfcfAfscs}, we use the definitions \eqref{eq:bfast} and \eqref{eq:cfast}, the simplifying formulas \eqref{eq:Afics}-\eqref{eq:Afecs}, and that the fast method is at least second order to obtain:
\begin{align*}
  \frac18
  &= \left(\bbf^{\fast } \times \cbf^{\fast } \right)^{T} \Abf^{\fastislow} \cbf^{\slow} \\
  & =  \left(\left(\Delta c^{\slow} \otimes b^{\fast} \right) \times \left(Lc^{\slow} \otimes \mathbbm{1}^{\fast} + \Delta c^{\slow} \otimes c^{\fast} \right) \right)^{T}\\
  &\qquad \left(L A^{\islowislow} \otimes \mathbbm{1}^{\fast} + \sum_{k=0}^{k_{\max}} \Gamma^{\{k\}}  \otimes \left(A^{\fastfast} c^{\fast \times k} \right)\right)c^{\slow}\\
  & = \left(\Delta c^{\slow} \times L c^{\slow} \right)^{T} \mathcal{A}^{\{I,\zeta\}} c^{\slow} + \left(\Delta c^{\slow \times 2} \right)^{T} \mathcal{A}^{\{I,\beta\}} c^{\slow}.
\end{align*}
A similar argument gives
\[
  \frac18 = \left(\Delta c^{\slow} \times L c^{\slow} \right)^{T} \mathcal{A}^{\{E,\zeta\}} c^{\slow} + \left(\Delta c^{\slow \times 2} \right)^{T} \mathcal{A}^{\{E,\beta\}} c^{\slow},
\]
which establishes the conditions \eqref{eq:4a}.
Using the simplifying formulas \eqref{eq:bfafi}-\eqref{eq:bfafe}, the order conditions \eqref{eq:bfAfscs2} become
\begin{equation*}
  \frac{1}{12} = \bbf^{\fast T} \Abf^{\{F,\sigma\}} \cbf^{\slow \times 2} = \Delta c^{\slow T} \mathcal{A}^{\{\sigma,\zeta\}} c^{\slow \times 2}
\end{equation*}
for $\sigma\in\{I,E\}$, which are equivalent to the conditions \eqref{eq:4b}.
For the order conditions \eqref{eq:bsAsfAfscs}, we use simplifying formulas \eqref{eq:biasf}-\eqref{eq:beasf} and \eqref{eq:Afics} to obtain for $\sigma\in\{I,E\}$:
\begin{align*}
  &\frac{1}{24} = \bbf^{\{\sigma\} T} \Abf^{\slowfast} \Abf^{\{F,I\}}\cbf^{\slow} \\
  &= \Big((\Delta c^{\slow} \times (Db^{\{\sigma\}})) \otimes b^{\fast} \Big)^T \Big(LA^{\islowislow} \otimes \mathbbm{1}^{\fast} + \sum_{k=0}^{k_{\max}} \Gamma^{\{k\}} \otimes (A^{\fastfast}c^{\fast \times k} \Big)c^{\slow}\\
  &= (\Delta c^{\slow} \times (Db^{\{\sigma\}}))^{T} \mathcal{A}^{\{I,\zeta\}} c^{\slow}.
\end{align*}
Similarly using the simplifying formulas \eqref{eq:biasf}-\eqref{eq:beasf} and \eqref{eq:Afecs}, we have
\[
  \frac{1}{24} = (\Delta c^{\slow} \times (Db^{\{\sigma\}}))^{T} \mathcal{A}^{\{E,\zeta\}} c^{\slow},
\]
resulting in the conditions \eqref{eq:4c}.
We use the definitions \eqref{eq:bfast} and \eqref{eq:Afastfast}, and the simplifying formula \eqref{eq:Afics} to convert the order condition \eqref{eq:bfAffAfscs} for $\sigma=I$:
\begin{align*}
  \frac{1}{24} &= \bbf^{\fast T} \Abf^{\fastfast} \Abf^{\fastislow} \cbf^{\slow} \\
  &= \left(\Delta c^{\slow} \otimes b^{\fast} \right)^{T} \left(\text{diag}\left(\Delta c^{\slow} \right) \otimes A^{\fastfast} + L \Delta C^{\slow} \otimes \mathbbm{1}^{\fast} b^{\fast T}\right)\\
  & \qquad \left(LA^{\islowislow} \otimes \mathbbm{1}^{\fast} + \sum_{k=0}^{k_{\max}} \Gamma^{\{k\}}  \otimes \left(A^{\fastfast} c^{\fast \times k} \right)\right)c^{\slow}\\
  &= \left(\left(\Delta c^{\slow \times 2}\right)^T \otimes (b^{\fast T} A^{\fastfast})  + \Delta c^{\slow T} L \Delta C^{\slow} \otimes b^{\fast T} \right)\\
  & \qquad \left(LA^{\islowislow} \otimes \mathbbm{1}^{\fast} + \sum_{k=0}^{k_{\max}} \Gamma^{\{k\}}  \otimes \left(A^{\fastfast} c^{\fast \times k} \right)\right)c^{\slow}\\
  &= \left(\Delta c^{\slow \times 2}\right)^T \mathcal{A}^{\{I,\xi\}} c^{\slow} + \Delta c^{\slow T} L \Delta C^{\slow} \mathcal{A}^{\{I,\zeta\}} c^{\slow}.
\end{align*}
Similarly, the simplifying formula \eqref{eq:Afecs} converts \eqref{eq:bfAffAfscs} for $\sigma=E$ to
\[
  \frac{1}{24} = \left(\Delta c^{\slow \times 2}\right)^T \mathcal{A}^{\{E,\xi\}} c^{\slow} + \Delta c^{\slow T} L \Delta C^{\slow} \mathcal{A}^{\{E,\zeta\}} c^{\slow},
\]
which establishes the conditions \eqref{eq:4d}.
Using the simplifying formulas \eqref{eq:bfafi} and \eqref{eq:bfafe}, the order conditions \eqref{eq:bfAfsAsscs} become for $\sigma,\nu\in\{I,E\}$:
\begin{align*}
  \frac{1}{24} = \bbf^{\fast T} \Abf^{\{F,\sigma\}} \Abf^{\{\nu,\nu\}}\cbf^{\slow} &= \Delta c^{\slow T} \mathcal{A}^{\{\sigma,\zeta\}} A^{\{\nu,\nu\}} c^{\slow},
\end{align*}
which are the coupling conditions \eqref{eq:4e}.
The final order conditions, \eqref{eq:bfAfsAsfcf}, may be simplified using formulas \eqref{eq:Asfcf} and \eqref{eq:bfafi}-\eqref{eq:bfafe} for $\sigma\in\{I,E\}$:
\begin{align*}
  \frac{1}{24} = \bbf^{\fast T} \Abf^{\{F,\sigma\}} \Abf^{\slowfast} \cbf^{\fast} = \frac12 \Delta c^{\slow} \mathcal{A}^{\{\sigma,\zeta\}} c^{\slow \times 2},
\end{align*}
which are equivalent to the coupling conditions \eqref{eq:4b}.
\end{proof}
\begin{remark}
For many IMEX-ARK methods the coefficients are chosen so that $\bbf^{\eslow} = \bbf^{\islow}$ to reduce the number of order conditions that must be satisfied.  Similarly, when $\bbf^{\eslow} = \bbf^{\islow}$ many of the 3-component GARK order conditions (on which IMEX-MRI-GARK methods rely) are duplicated.  One could then wonder whether the assumption $\bbf^{\eslow} = \bbf^{\islow}$ would significantly reduce the number of order conditions required to derive IMEX-MRI-GARK methods.  This is not in fact the case, since the large majority of these duplicated GARK order conditions are already automatically satisfied in \eqref{eq:order4_auto} due to the IMEX-MRI-GARK structure and our assumptions on the order of the underlying IMEX-ARK method.  Of the remaining 16 GARK order conditions in \eqref{eq:order4_other} that are not automatically satisfied, only the conditions \eqref{eq:bsAsfAfscs} (that correspond with the IMEX-MRI-GARK condition \eqref{eq:4c}) benefit from an assumption that $\bbf^{\eslow} = \bbf^{\islow}$, causing those 4 conditions to simplify to 2.  Thus although all of the IMEX-MRI-GARK methods presented later in Section \ref{sec:example_tables} are derived from IMEX-ARK methods satisfying $\bbf^{\eslow} = \bbf^{\islow}$, this should by no means be considered as a requirement when deriving new IMEX-MRI-GARK methods.
\end{remark}
%%%%%

\section{Linear Stability}
\label{section3}
There is no standard theoretical framework for analyzing linear stability of methods for additive problems (of \emph{either} form \eqref{eq:usualform} or \eqref{eq:probode}).  Thus although it relies on an assumption that the Jacobians with respect to $y$ of $\fislow$, $\feslow$ and $\ffast$ are simultaneously diagonalizable, similar to \cite{Sandu2019} we analyze linear stability on an additive scalar test problem:
\begin{equation}
\label{eq:scalartest}
    y' = \lambda^{\fast}y + \lambda^{\eslow}y + \lambda^{\islow}y,\quad t\ge 0,\quad y(0) = 1,
\end{equation}
where each of $\lambda^{\fast}, \lambda^{\eslow} , \lambda^{\islow} \in \mathbb{C}^{-}$, and we define $z^{\fast} := H\lambda^{\fast}$, $z^{\eslow} := H \lambda^{\eslow}$, and $z^{\islow} := H\lambda^{\islow}$. Applying the IMEX-MRI-GARK method \eqref{mriimexs} to the scalar model problem \eqref{eq:scalartest}, the modified fast IVP for each slow stage  $i = 2,\ldots, \sslow$ becomes:
\begin{align*}
    v' &= \Delta c_i^{\slow} \lambda^{\fast}v + \lambda^{\eslow} \sum_{j = 1}^{i-1} \omega_{i,j}\!\left(\frac{\theta}{H} \right) Y_j^{\slow} + \lambda^{\islow} \sum_{j = 1}^{i} \gamma_{i,j}\!\left(\frac{\theta}{H}\right) Y_j^{\slow}\\
    &= \Delta c_i^{\slow} \lambda^{\fast}v + \lambda^{\eslow} \sum_{j = 1}^{i-1}\sum_{k=0}^{k_{\max}}  \omega_{i,j}^{\{k\}}\frac{\theta^k}{H^k} Y_j^{\slow} +
    \lambda^{\islow} \sum_{j = 1}^{i}
    \sum_{k=0}^{k_{\max}} \gamma_{i,j}^{\{k\}}\frac{\theta^k}{H^k} Y_j^{\slow},
\end{align*}
for $\theta\in[0,H]$, with initial condition $v(0) = Y_{i-1}^{\slow}$.
We solve for the updated slow stage $Y_{i}^{\slow}:=v(H)$ analytically using the variation of constants formula:
\begin{align}
  \label{eq:yplus1stab}
  Y_{i}^{\slow} &=
  e^{\Delta c_i^{\slow} z^{\fast}} Y_{i-1}^{\slow} + z^{\eslow} \sum_{j=1}^{i-1} \sum_{k=0}^{k_{\max}} \omega_{i,j}^{\{k\}} \Bigg( \int_{0}^{1} e^{\Delta c_i^{\slow} z^{\fast} (1-t)} t^{k} \mathrm dt \Bigg) Y_j^{\slow} \\
  &\qquad  + z^{\islow} \sum_{j=1}^{i} \sum_{k=0}^{k_{\max}} \gamma_{i,j}^{\{k\}} \Bigg( \int_{0}^{1} e^{\Delta c_i^{\slow} z^{\fast} (1-t)} t^{k} \mathrm dt \Bigg) Y_j^{\slow} \notag \\
  &= \varphi_0\!\left(\Delta c_i^{\slow} z^{\fast} \right) Y_{i-1}^{\slow} + z^{\eslow} \sum_{j=1}^{i-1} \eta_{i,j}(z^{\fast}) Y_j^{\slow} + z^{\islow} \sum_{j=1}^{i} \mu_{i,j}(z^{\fast}) Y_j^{\slow}, \notag
\end{align}
where $\eta$ and $\mu$ depend on the fast variable:
\begin{align*}
  \eta_{i,j}\!\left(z^{\fast}\right) &= \sum_{k=0}^{k_{\max}} \omega_{i,j}^{\{k\}} \varphi_{k+1}\!\left(\Delta c_i^{\slow} z^{\fast}\right)\\
  \mu_{i,j}\!\left(z^{\fast}\right) &= \sum_{k=0}^{k_{\max}} \gamma_{i,j}^{\{k\}} \varphi_{k+1}\!\left(\Delta c_i^{\slow} z^{\fast}\right),
\end{align*}
and the family of analytical functions $\{\varphi_k\}$ are defined as in \cite{Sandu2019},
\begin{equation*}
    \varphi_0(z) = e^{z}, \quad \varphi_{k}(z) = \int_{0}^{1} e^{z(1-t)} t^{k-1} \mathrm dt,\quad k \geq 1,
\end{equation*}
or recursively as
\begin{equation*}
    \varphi_{k+1}(z) = \frac{k\, \varphi_k(z) - 1}{z}, \quad k \geq 1.
\end{equation*}
Concatenating $Y=\begin{bmatrix} Y_{1}^{\slow T} & \cdots & Y_{\sslow}^{\slow T}\end{bmatrix}^T$, we can write \eqref{eq:yplus1stab} in matrix form as
\begin{align*}
  Y &= \text{diag} \Big(\varphi_0\big(\Delta c^{\slow} z^{\fast} \big) \Big)LY + \varphi_0\big(\Delta c_1 z^{\fast}\big)y_n e_1 \\
  &\qquad + z^{\eslow} \eta \big(z^{\fast}\big)Y + z^{\islow} \mu \big(z^{\fast}\big) Y \notag\\
  &= \Bigg(I - \text{diag} \Big(\varphi_0\big(\Delta c^{\slow} z^{\fast} \big) \Big)L - z^{\eslow} \eta \big(z^{\fast}\big) - z^{\islow} \mu \big(z^{\fast}\big)  \Bigg)^{-1}y_n e_1,
\end{align*}
where
\begin{align*}
  \eta \big(z^{\fast}\big) &= \sum_{k=0}^{k_{\max}} \text{diag} \Big(\varphi_{k+1}\big(\Delta c^{\slow} z^{\fast} \big) \Big) \Omega^{\{k\}} \quad\text{and}\\
  \mu \big(z^{\fast}\big) &= \sum_{k=0}^{k_{\max}} \text{diag} \Big(\varphi_{k+1}\big(\Delta c^{\slow} z^{\fast} \big) \Big) \Gamma^{\{k\}}.
\end{align*}
Thus the linear stability function for IMEX-MRI-GARK on the problem \eqref{eq:scalartest} becomes
\begin{align}
\label{eq:stability_function}
    &R\big(z^{\fast},z^{\eslow}, z^{\islow}\big) \\
    &:= e_{\sslow}^{T} \Bigg(I - \text{diag} \Big(\varphi_0\big(\Delta c^{\slow} z^{\fast} \big) \Big)L - z^{\eslow} \eta \big(z^{\fast}\big)  - z^{\islow} \mu \big(z^{\fast}\big)  \Bigg)^{-1} e_1. \notag
\end{align}
Following a similar definition as in \cite{Zharovsky2015Class}, we define the joint stability for the slow, nonstiff region as:
\begin{equation*}
    \mathcal{J}_{\alpha,\beta} \coloneqq \left\{z^{\eslow} \in \mathbb{C}^{-} \;:\; |R(z^{\fast},z^{\eslow},z^{\islow})|\leq 1,\; \forall z^{\fast} \in \mathcal{S}_{\alpha}^{\fast},\; \forall z^{\islow} \in \mathcal{S}_{\beta}^{\islow} \right\}
\end{equation*}
where
%\begin{equation*}
    $\mathcal{S}_{\alpha}^{\sigma} := \left\{z^{\sigma} \in \mathbb{C}^{-} \;:\; |\arg (z^{\sigma})- \pi| \leq \alpha \right\}$.
%\end{equation*}
Since such stability regions are not widespread in the literature, we highlight the role of each component, before plotting these for candidate IMEX-MRI-GARK methods in the next section.  $\mathcal{J}_{\alpha,\beta}$ provides a plot of the stability region for the slow explicit component only, under assumptions that (a) $z^{\islow}$ can range throughout an entire infinitely long sector $\mathcal{S}_{\alpha}^{\islow}$ in the complex left half plane, and (b) $z^{\fast}$ can range throughout another [infinite] sector $\mathcal{S}_{\beta}^{\fast}$ in $\mathbb{C}^{-}$.  These sectors both include the entire negative real axis, as well as a swath of values with angle at most $\alpha$ or $\beta$ above and below this axis, respectively.  As such, one should expect the joint stability region $\mathcal{J}_{\alpha,\beta}$ to be significantly smaller than the standard stability region for just the slow explicit table $(A^{\eslow},b^{\eslow},c^{\eslow})$, and to shrink in size as both $\alpha,\beta$ increase.  Furthermore, we note that this notion of a joint stability region is artificially restrictive, since in practice the functions $\fislow$ and $\ffast$ will not be \emph{infinitely} stiffer than $\feslow$.

%%%%%

\section{Example IMEX-MRI-GARK Methods}
\label{sec:example_tables}

While our focus in this paper is on the underlying theory regarding IMEX-MRI-GARK methods of the form \eqref{def:IMEX-MRI-method}, in this section we discuss how IMEX-MRI-GARK methods may be constructed, and provide methods of orders 3 and 4 to use in demonstrating our numerical results in Section \ref{sec:numerical_results}.

\subsection{Third-order Methods}
\label{sec:IMEX-MRI3}
We create two third order IMEX-MRI-GARK methods, both based on the `(3,4,3)' IMEX-ARK method from Section 2.7 of \cite{ascherImplicitexplicitRungeKuttaMethods1997},
\begin{center}
\begin{tabular}{c|cccc|cccc}
                   $0$ &         $0$ &       $0$ &      $0$ & $0$ & $0$ &      $0$ & $0$ & $0$ \\
              $\eta$ &    $\eta$ &       $0$ &      $0$ & $0$ & $0$ & $\eta$ & $0$ & $0$ \\
  $\frac{1+\eta}{2}$ &   $a_{3,1}$ & $a_{3,2}$ &      $0$ & $0$ & $0$ & $\frac{1-\eta}{2}$ & $\eta$ & $0$ \\
                   $1$ & $1-2\alpha$ &  $\alpha$ & $\alpha$ & $0$ & $0$ & $b_2$ & $b_3$ & $\eta$ \\
  \hline
                   $1$ & $0$ & $b_2$ & $b_3$ & $\eta$ & $0$ & $b_2$ & $b_3$ & $\eta$
\end{tabular}
\end{center}
where
\begin{align*}
  \eta &= 0.4358665215084589994160194511935568425293,\\
  \alpha &= 0.5529291480359398193611887297385924764949,\\
  a_{3,2} &= \left(-\frac{15}{4} + 15\eta - \frac{21}{4}\eta^2\right)\alpha + 4 - \frac{25}{2}\eta + \frac92\eta^2,\\
  a_{3,1} &= \left(\frac{15}{4} - 15\eta + \frac{21}{4}\eta^2\right)\alpha - \frac72 + 13\eta - \frac92\eta^2,\\
  b_2 &= -\frac32\eta^2 + 4\eta - \frac14,\\
  b_3 &= \frac32\eta^2 - 5\eta + \frac54.
\end{align*}
%This particular base method satisfies $b^{\eslow} = b^{\islow}$ so there are fewer order conditions to be satisfied.
As the explicit portion of this pair is not `stiffly accurate' we pad the tables as discussed in Section \ref{section2}.  We then convert this to `solve-decoupled' form \cite{Sandu2019} by inserting additional rows and columns into the tables to ensure that any stage with a nonzero diagonal value in the slow implicit table is associated with $\Delta c_i=0$,
\vspace{1em}
\scriptsize
\begin{center}
\begin{tabular}{c|cccccccc|cccccccc}
                   0 &           0 & 0 &         0 & 0 &        0  &        0 & 0 &      0&    0 & 0 &                    0 & 0 &        0 & 0 &        0 & 0      \\
            $\eta$ &      $\eta$ & 0 &         0 & 0 &        0 &         0 & 0 &      0&  $\eta$ & 0 &                    0 & 0 &        0 & 0 &        0 & 0       \\
            $\eta$ &    $\eta$ & 0 &         0 & 0 &        0 &        0 & 0 &      0&    0 & 0 &             $\eta$ & 0 &        0 & 0 &        0 & 0       \\
$\frac{1+\eta}{2}$ &      $\Box$ & 0 &    $\Box$ & 0 &        0 &      0 & 0 &      0& $\Box$ & 0 &               $\Box$ & 0 &        0 & 0 &        0 & 0      \\
$\frac{1+\eta}{2}$ &   $a_{3,1}$ & 0 & $a_{3,2}$ & 0 &        0 & 0 &        0 &      0&    0 & 0 & $\frac{1-\eta}{2}$ & 0 & $\eta$ & 0 &        0 & 0        \\
                 $1$ &      $\Box$ & 0 &    $\Box$ & 0 &   $\Box$ & 0 &        0 &      0& $\Box$ & 0 &               $\Box$ & 0 &   $\Box$ & 0 &        0 & 0   \\
                 $1$ & $1-2\alpha$ & 0 &  $\alpha$ & 0 & $\alpha$ & 0 &        0 &      0&    0 & 0 &                $b_2$ & 0 &    $b_3$ & 0 & $\eta$ & 0     \\
                 $1$ &           0 & 0 &     $b_2$ & 0 &    $b_3$ & 0 & $\eta$ & 0 &         0 & 0 &                $b_2$ & 0 &    $b_3$ & 0 & $\eta$ & 0      \\
                  \hline
                 $1$ &           0 & 0 &     $b_2$ & 0 &    $b_3$ & 0 & $\eta$ & 0 &         0 & 0 &                $b_2$ & 0 &    $b_3$ & 0 & $\eta$ & 0
\end{tabular}
\end{center}
\normalsize
\vspace{1em}
where each entry in $A^{\esloweslow}$ and $A^{\islowislow}$ above labeled with $\Box$ need only be chosen to satisfy internal consistency for the ARK table.
We note that although the proposed IMEX-MRI-GARK methods \eqref{mriimexs} do not \emph{require} that the implicit portion of the IMEX-ARK table have this `solve-decoupled' pattern, we create tables with this structure due to their ease of implementation.  Specifically, if the corresponding IMEX-MRI-GARK method included a `solve-coupled' stage $i$ (i.e., both $\overline{\gamma}_{i,i} \ne 0$ and $\Delta c_i \ne 0$), then the stage solution $Y_i^{\slow}$ must \emph{both} define the fast IVP right-hand side \eqref{eq:b},
\begin{align*}
   v'(\theta) &= \Delta c_i^{\slow} \ffast\!\left(T_{i-1}+\Delta c_i^{\slow}\theta,\, v(\theta)\right) + \sum_{j=1}^{i-1} \left(\gamma_{i,j}\!\left(\tfrac{\theta}{H}\right)f^{\islow}_j + \omega_{i,j}\!\left(\tfrac{\theta}{H}\right)f^{\eslow}_j\right) \\
   &+ \gamma_{i,i}\!\left(\tfrac{\theta}{H}\right)f^{\islow}\!\left(t_n+c_i^{\slow}H, Y_i^{\slow}\right),
   \quad \theta \in [0,H],
\end{align*}
\emph{and} be the solution to this fast IVP, $Y_i^{\slow} = v(H)$.  Solve-decoupled methods, on the other hand, may be performed by alternating between standard implicit solves for each implicit stage, followed by fast evolution for non-implicit stages.  However, as noted in \cite{robertsImplicitMultirateGARK2019,robertsCoupledMultirateInfinitesimal2020}, while the solve-decoupled approach makes for easier implementation of MRI methods, it also results in methods with diminished stability.

The first IMEX-MRI-GARK method that we built from the table above is ``IMEX-MRI-GARK3a''. We simultaneously found the 10 $\Box$ values to complete the IMEX-ARK table, the 24 unknown $\Gamma^{\{0\}}$ coefficients and the 20 unknown $\Omega^{\{0\}}$ coefficients by solving the ARK consistency conditions \eqref{eq:AslowGARK}, the internal consistency conditions \eqref{eq:internalcons}, and the third order conditions \eqref{eq:thirdordercoupling}. Since this only constitutes 50 unique conditions that depend linearly on 54 unknown entries, the corresponding linear system of equations was under-determined.  For IMEX-MRI-GARK3a we used the particular solution returned by MATLAB (a minimum-norm least-squares solution). The resulting nonzero coefficients $\cbf^{\slow}$, $\Gamma^{\{0\}}$ and $\Omega^{\{0\}}$ are provided in Appendix \ref{sec:IMEX-MRI3a}.

Our second IMEX-MRI-GARK method, ``IMEX-MRI-GARK3b,'' was also constructed from this same base IMEX-ARK table.  Here, beginning with the IMEX-MRI-GARK3a particular solution above, we then used the four remaining free variables to maximize the extent of the joint stability region along the negative real-axis.  The nonzero coefficients $\cbf^{\slow}$, $\Gamma^{\{0\}}$ and $\Omega^{\{0\}}$ for the resulting method are given in Appendix \ref{sec:IMEX-MRI3b}.

\begin{remark}
An alternative approach for creating solve-decoupled third order IMEX-MRI-GARK methods is to take advantage of the free $\Box$ variables within the extended IMEX-ARK table, plus assumptions that $\overline{\Gamma} = \Gamma^{\{0\}}$ and $\overline{\Omega} = \Omega^{\{0\}}$.  Here, one may select the $\Box$ values to ensure that the IMEX-ARK is internally consistent and satisfies
\begin{equation}
\label{eq:thirdordercoupling}
    \Delta c^{\slow T}\Big(L + \tfrac{1}{2} E^{-1} \Big)A^{\esloweslow}c^{\slow} = \frac{1}{6}, \quad \Delta c^{\slow T}\Big(L + \tfrac{1}{2} E^{-1} \Big)A^{\islowislow}c^{\slow} = \frac{1}{6},
\end{equation}
as these are equivalent to the third order coupling conditions \eqref{eq:order3}, with equation \eqref{eq:AslowGARK} providing one-to-one correspondences between $A^{\islowislow}$ and $\Gamma^{\{0\}}$, and between $A^{\esloweslow}$ and $\Omega^{\{0\}}$.  We note that the conditions \eqref{eq:thirdordercoupling} each correspond to the previously-discovered third order condition for MIS methods introduced in \cite{knothImplicitExplicit1998}.
% (equation 24) and \cite{guntherMultirateGeneralizedAdditive2016} equation 31.
\end{remark}

\begin{figure}[htbp]
    \centering
    \subfigure[$\mathcal{J}_{10^o,\beta}$ for IMEX-MRI-GARK3a]{\includegraphics[width = .49\textwidth,trim={2.5cm 0 1.5cm 0},clip]{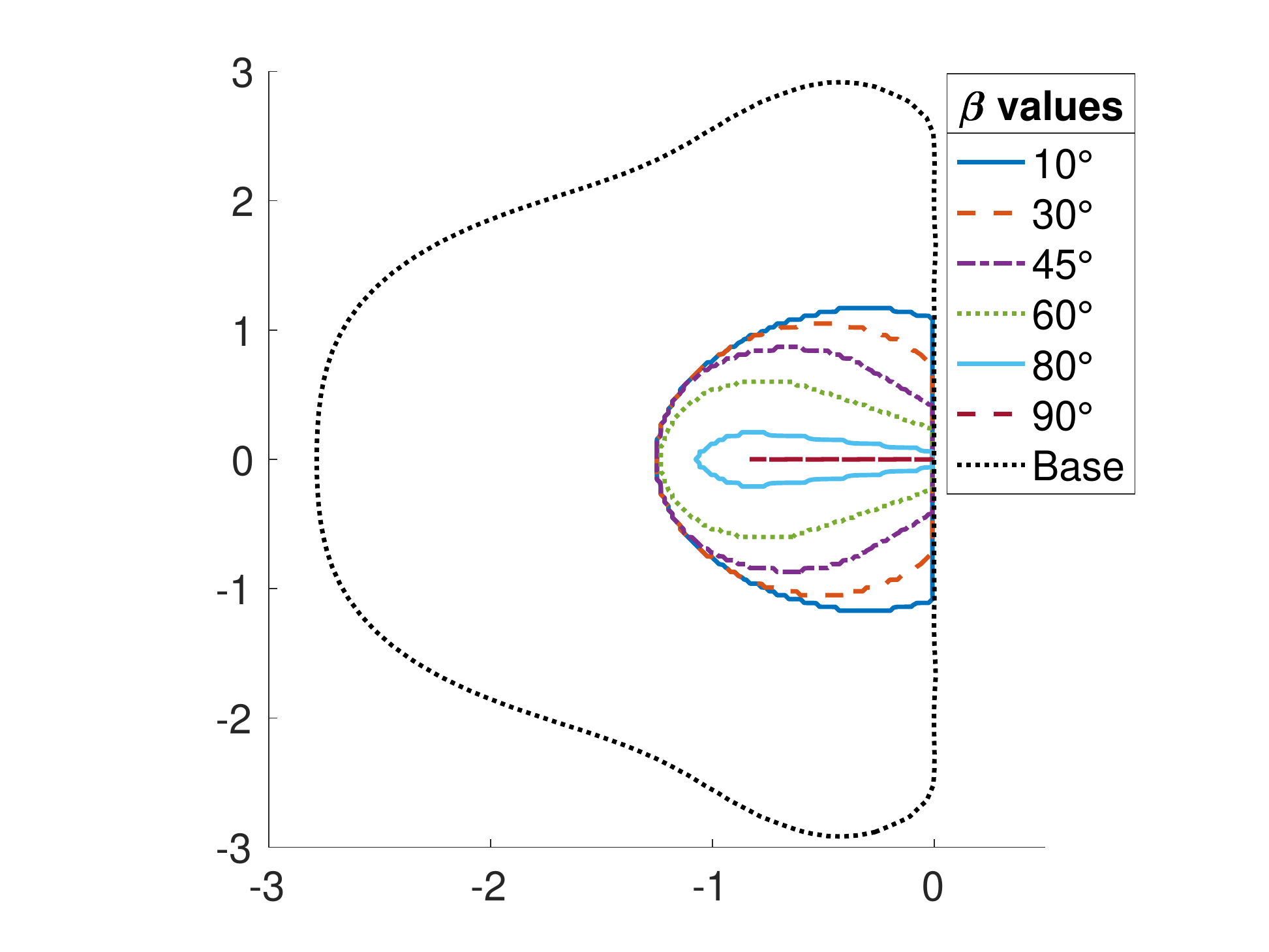}}
    \subfigure[$\mathcal{J}_{10^o,\beta}$ for IMEX-MRI-GARK3b]{\includegraphics[width = .49\textwidth,trim={2.5cm 0 1.5cm 0},clip]{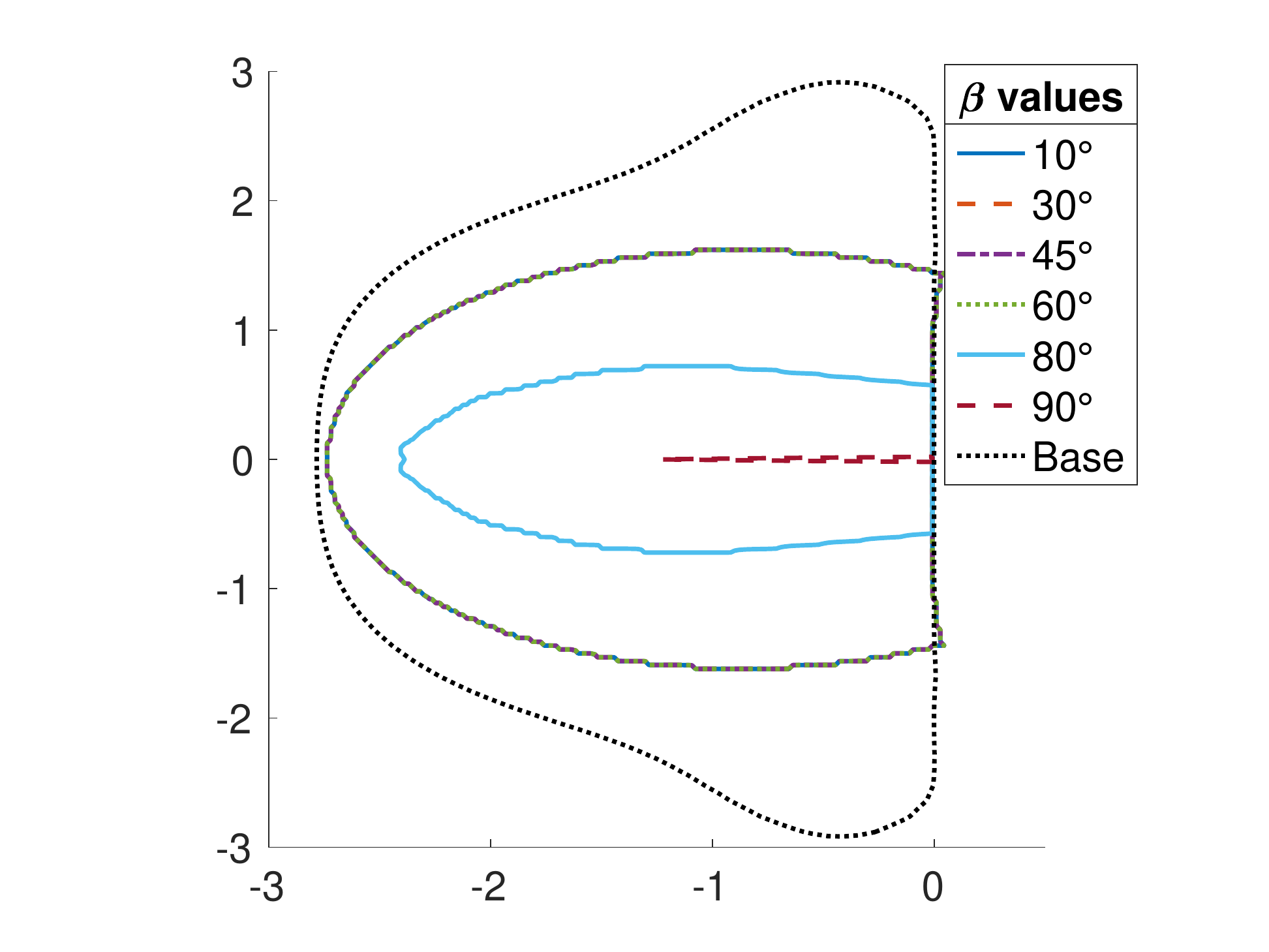}}
    \subfigure[$\mathcal{J}_{45^o,\beta}$ for IMEX-MRI-GARK3a]{\includegraphics[width = .49\textwidth,trim={2.5cm 0 1.5cm 0},clip]{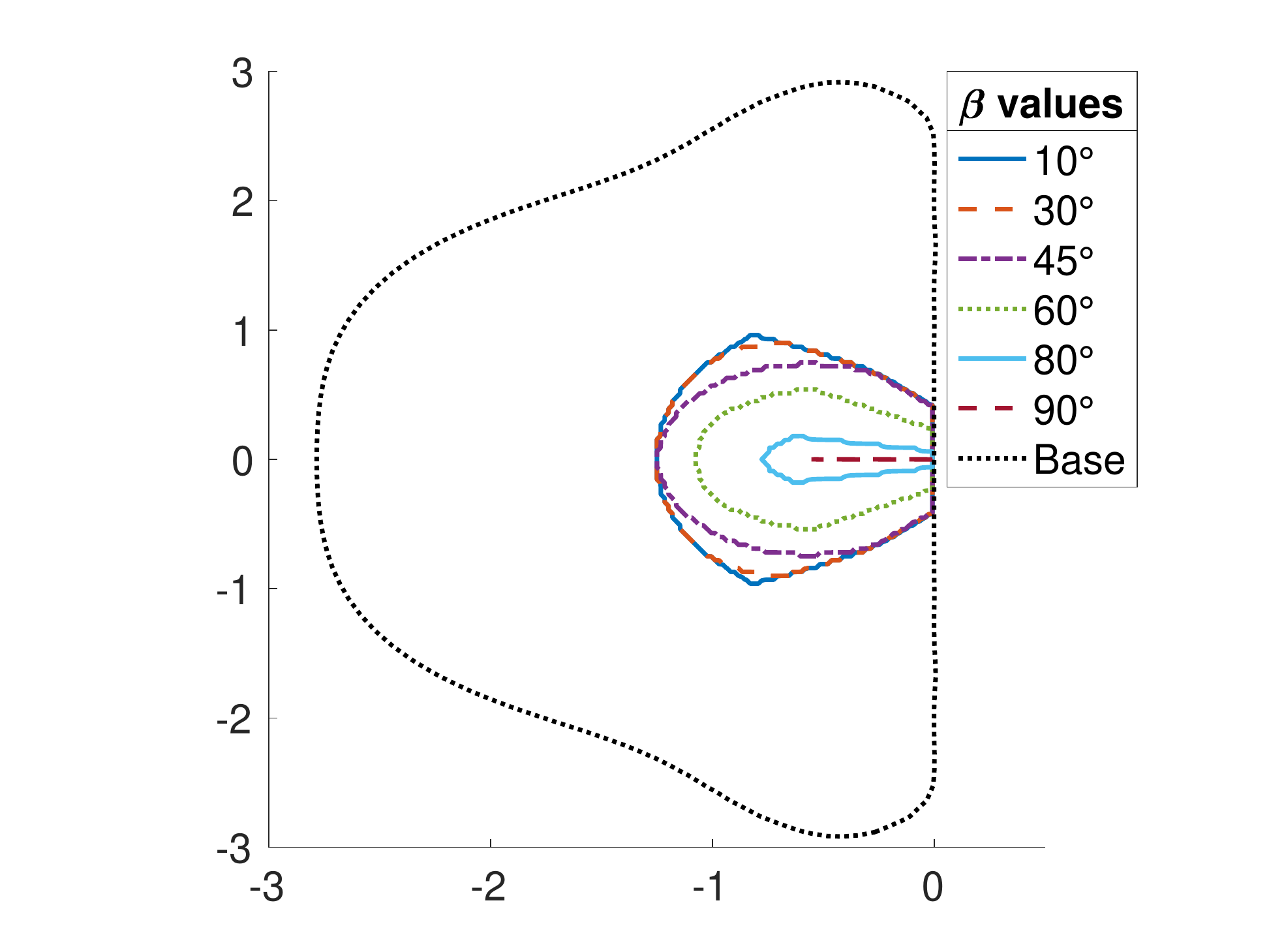}}
    \subfigure[$\mathcal{J}_{45^o,\beta}$ for IMEX-MRI-GARK3b]{\includegraphics[width = .49\textwidth,trim={2.5cm 0 1.5cm 0},clip]{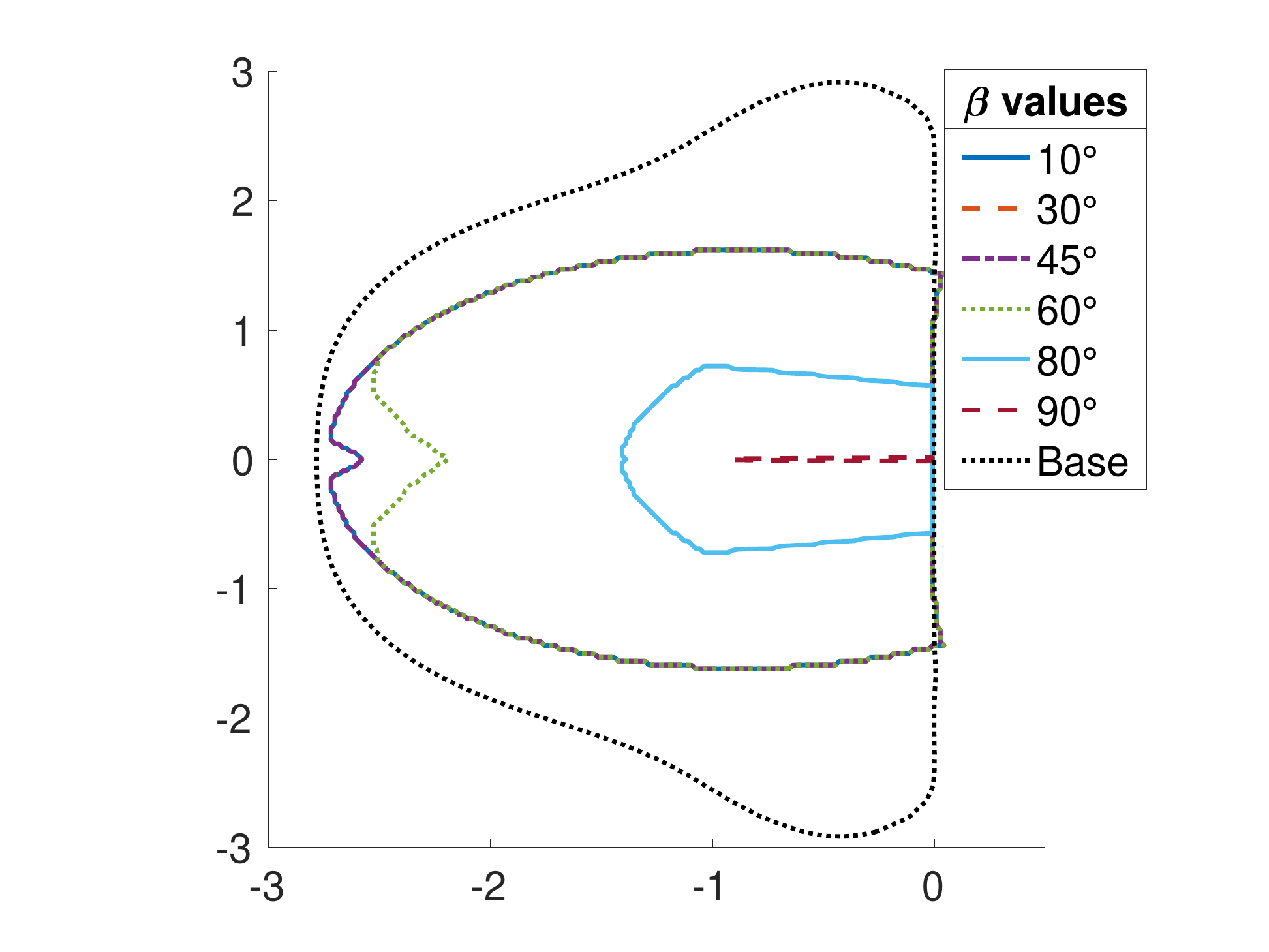}}
    \caption{Joint stability regions $\mathcal{J}_{\alpha,\beta}$ for both IMEX-MRI-GARK3a (left) and IMEX-MRI-GARK3b (right), at fast sector angles $\alpha=10^o$ (top) and $\alpha=45^o$ (bottom), for a variety of implicit sector angles $\beta$.  Each plot includes the joint stability region for the base IMEX-ARK table (shown as ``Base'').  The benefits of simultaneously optimizing the IMEX-MRI-GARK coefficients $\Gamma^{\{0\}}$ and $\Omega^{\{0\}}$ are clear, as $\mathcal{J}_{\alpha,\beta}$ for IMEX-MRI-GARK3b are significantly larger than those for IMEX-MRI-GARK3a.}
    \label{fig:imex-mri3-stability}
\end{figure}

In Figure \ref{fig:imex-mri3-stability} we plot the joint stability regions $\mathcal{J}_{\alpha,\beta}$ for both the IMEX-MRI-GARK3a and IMEX-MRI-GARK3b methods, for the fast time scale sectors $\mathcal{S}_{\alpha}^{\fast}$, $\alpha\in\{10^o,45^o\}$ and for the slow implicit sectors $\mathcal{S}_{\beta}^{\islow}$, $\beta\in\{10^o,30^o,45^o,60^o,80^o,90^o\}$.  In these figures we also plot the joint stability region for the slow base IMEX-ARK method, taken using the implicit slow wedge $\mathcal{S}_{90^o}^{\islow}$ (black dotted line). These results indicate that the joint stability regions for IMEX-MRI-GARK3a at each fast and implicit sector angle is significantly smaller than the base IMEX-ARK stability region.  Furthermore, these stability regions shrink considerably as the implicit sector angle $\beta$ grows from $10^o$ to $80^o$.  In contrast, the joint stability regions for IMEX-MRI-GARK3b are much larger, encompassing the majority of the base IMEX-ARK stability region for both fast sector angles $\alpha=10^o$ and $45^o$, and for implicit sector angles $\beta\le 60^o$, including a significant extent along the imaginary axis.  We therefore anticipate that this method should provide increased stability for IMEX multirate problems wherein advection comprises the slow explicit portion, as the corresponding Jacobian eigenvalues typically reside on the imaginary axis.

\subsection{Fourth-order Method}

We also constructed a fourth-order IMEX-MRI-GARK method using a base IMEX-ARK method of our own design (since we knew of no existing fourth-order method that satisfied our `sorted abscissae' requirement, $0 = c_1^{\slow} \leq \cdots \leq c_{\sslow}^{\slow} \leq 1$). To obtain IMEX-MRI-GARK4 we first converted our IMEX-ARK table to solve-decoupled form and then obtained the missing coefficients by satisfying internal consistency of the IMEX-ARK method. We then found the unknowns in $\Gamma^{\{0\}}$, $\Gamma^{\{1\}}$, $\Omega^{\{0\}}$ and $\Omega^{\{1\}}$ by solving the linear system resulting from \eqref{eq:AslowGARK}, \eqref{eq:internalcons}, \eqref{eq:order3} and \eqref{eq:order4} in MATLAB. The nonzero coefficients $\cbf^{\slow}$, $\Gamma^{\{0\}}$, $\Gamma^{\{1\}}$, $\Omega^{\{0\}}$ and $\Omega^{\{1\}}$ for this method, again accurate to 36 decimal digits, are given in Appendix \ref{sec:IMEX-MRI4}.

While this method indeed satisfies the full set of ARK consistency conditions \eqref{eq:AslowGARK}, internal consistency conditions \eqref{eq:internalcons}, third order conditions \eqref{eq:order3}, and fourth order conditions \eqref{eq:order4}, we have not yet been successful at optimizing its joint stability region $\mathcal{J}_{\alpha,\beta}$.  In fact, even when ignoring the slow explicit portion by setting $z^{\eslow}=0$ in our stability function \eqref{eq:stability_function}, the implicit+fast joint stability region is very small, rendering the full joint stability regions $\mathcal{J}_{\alpha,\beta}$ empty.  While we have already noted that this definition of joint stability is overly restrictive, and thus there may indeed be applications in which IMEX-MRI-GARK4 is suitable, we do not promote its widespread use, but include it here to demonstrate the predicted fourth-order convergence in our multirate example problems.

%%%%%

\section{Numerical Results}
\label{sec:numerical_results}

In this section we demonstrate the expected rates of convergence for the IMEX-MRI-GARK methods from Section \ref{sec:example_tables}.  Additionally, we compare the efficiency of the proposed methods against the legacy Lie--Trotter and Strang--Marchuk splittings \eqref{eq:first_order_split} and \eqref{eq:Strang_split}, as well as against two implicit MRI-GARK schemes from \cite{Sandu2019} of orders three and four, respectively: MRI-GARK-ESDIRK34a and MRI-GARK-ESDIRK46a. We consider two test problems: in Section \ref{subsec:kpr} we use a small Kv{\ae}rno-Prothero-Robinson (KPR) test problem to demonstrate the convergence of our methods, and in Section \ref{subsec:brusselator} we use a more challenging stiff `brusselator' test problem to investigate computational efficiency. Computations for the KPR problem were carried out in MATLAB while computations for the brusselator test were carried out in C using infrastructure from ARKODE, an ODE integration package within the SUNDIALS suite which provides explicit, implicit, and IMEX Runge--Kutta methods as well as MRI-GARK methods \cite{gardner2020enabling}. MATLAB implementations of both test problems are available in the public GitHub repository \cite{IMEXMRI_repo}.

\subsection{Kv{\ae}rno-Prothero-Robinson (KPR) Test}
\label{subsec:kpr}

We first consider the KPR test problem adapted from Sandu \cite{Sandu2019},
\begin{equation*}
  \begin{bmatrix} u \\ v \end{bmatrix}' = \mathbf{\Lambda} \begin{bmatrix} \frac{-3 + u^2 - \cos(\beta t)}{2u} \\ \frac{-2 + v^2 - \cos(t)}{2v} \end{bmatrix} - \begin{bmatrix} \frac{\beta \sin(\beta t)}{2u} \\ \frac{\sin(t)}{2v}\end{bmatrix}, \quad t\in\left[0,\tfrac{5\pi}{2}\right],
\end{equation*}
where
\[
  \mathbf{\Lambda} = \begin{bmatrix}\lambda^{\fast} & \frac{1-\varepsilon}{\alpha} (\lambda^{\fast} - \lambda^{\slow}) \\ -\alpha \varepsilon (\lambda^{\fast} - \lambda^{\slow}) & \lambda^{\slow} \end{bmatrix},
\]
and with initial conditions $u(0)=2$, $v(0)=\sqrt3$, corresponding to the exact solutions $u(t) = \sqrt{3 + \cos(\beta t)}$ and $v(t) = \sqrt{2 + \cos(t)}$.  Here, $u$ and $v$ correspond to the ``fast'' and ``slow'' solution variables, respectively.  We use the parameters $\lambda^{\fast} = -10$, $\lambda^{\slow} = -1$, $\varepsilon = 0.1$, $\alpha = 1$, $\beta = 20$. While this problem does not inherently require IMEX methods at the slow time scale, it is both nonlinear and non-autonomous, and has an analytical solution.  Thus it serves as an excellent problem to assess the convergence rates for the proposed IMEX-MRI-GARK methods.

We split this problem into the form \eqref{eq:probode} by setting each portion of the right hand side to be
\begin{align*}
    \feslow &= \begin{bmatrix}
       0 \\ \frac{\sin(t)}{2v}
    \end{bmatrix},\qquad
    \fislow = \begin{bmatrix} 0&0\\0&1\end{bmatrix} \mathbf{\Lambda} \begin{bmatrix} \frac{-3 + u^2 - \cos(\beta t)}{2u} \\ \frac{-2 + v^2 - \cos(t)}{2v} \end{bmatrix}, \qquad\text{and}\\
    \ffast &= \begin{bmatrix} 1&0\\0&0\end{bmatrix} \mathbf{\Lambda} \begin{bmatrix} \frac{-3 + u^2 - \cos{\beta t}}{2u} \\ \frac{-2 + v^2 - \cos(t)}{2v} \end{bmatrix} - \begin{bmatrix} \frac{\beta \sin(\beta t)}{2u} \\ 0 \end{bmatrix}.
\end{align*}
The slow component for implicit MRI-GARK methods is the sum of $\feslow$ and $\fislow$ which is then treated implicitly.

For the fast time scale of each method we use a step of size $h=\frac{H}{20}$, where we match the order of the inner solver with the overall method order: IMEX-MRI-GARK3 (a, b) and MRI-GARK-ESDIRK34a use the third-order explicit ``RK32'' from equation (233f) of \cite{butcherNumericalMethodsOrdinary2008},  IMEX-MRI-GARK4 and MRI-GARK-ESDIRK46a use the popular fourth-order explicit ``RK4'' method from \cite{kutta1901}, Strang--Marchuk uses the second-order explicit Heun method, and Lie--Trotter uses the explicit forward Euler method. For the implicit slow components of each method we use a standard Newton-Raphson nonlinear solver with dense Jacobian matrix and linear solver.

\begin{figure}[htbp]
    \centering
    {\includegraphics[scale=0.5]{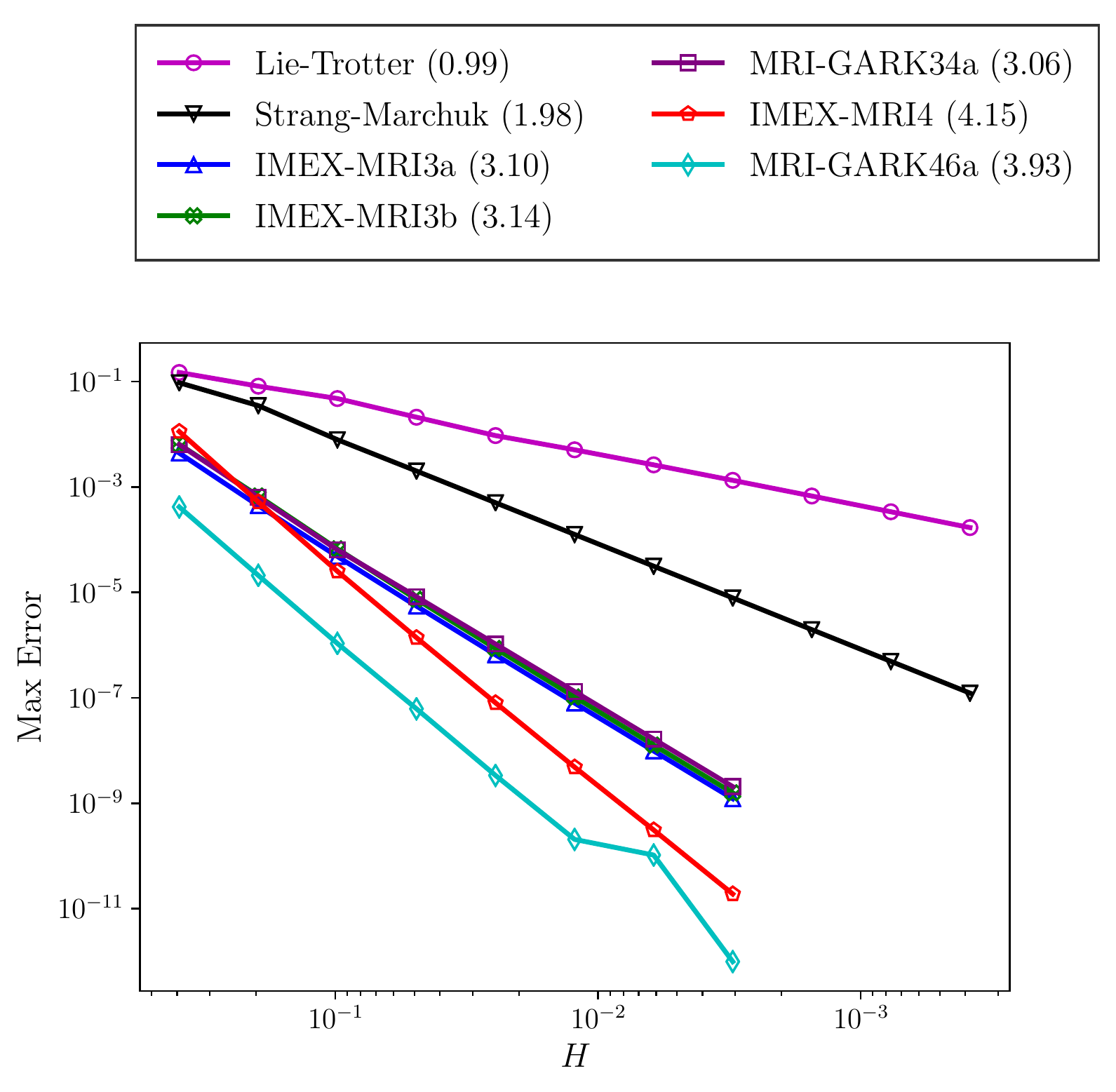}}
    \caption{Convergence for the KPR test problem from Section \ref{subsec:kpr}.  The measured convergence rates (given in parentheses) for each method match their theoretical predictions.}
    \label{fig:kpr_conv}
\end{figure}
In Figure \ref{fig:kpr_conv} we plot the maximum solution error over a set of 20 evenly-spaced temporal outputs in $[0,5\pi/2]$ for each method, at each of the slow step sizes $H = \pi/2^k$, for $k = 3,\ldots,10$ with IMEX-MRI-GARK and MRI-GARK methods and $k = 3,\ldots,13$ for the legacy methods. In the legend parentheses we show the overall estimated convergence rate, computed using a least-squares best fit of the $\log(\text{Max Error})$ versus $\log(H)$ results for each method.  For each method the theoretical order of convergence is reproduced.

\subsection{Brusselator Test}
\label{subsec:brusselator}
Our second, and more strenuous, test problem focuses on an advection-diffusion-reaction system of partial differential equations, as these are pervasive in computational physics and are typically solved using one of the two legacy methods \eqref{eq:first_order_split} or \eqref{eq:Strang_split}.  Here, both advection and diffusion may be evolved at the slow time scale, but due to their differential structure advection is typically treated explicitly, while diffusion is implicit.  Chemical reactions, however, frequently evolve on much faster time scales than advection and diffusion, and due to their nonlinearity and bound constraints (typically these are mass densities that must be non-negative), often require subcycling for both accuracy and stability.

We therefore consider the following example which is a stiff variation of the standard ``brusselator'' test problem \cite{hairerSolvingOrdinaryDifferential1993,hairerSolvingOrdinaryDifferential1996}:
\begin{align*}
    u_t &=  \alpha_u u_{xx} + \rho_u u_x + a - (w + 1)u + u^2v ,\\
    v_t &=  \alpha_v v_{xx} + \rho_v v_x + wu - u^2v,\\
    w_t &=  \alpha_w w_{xx} + \rho_w w_x + \frac{b-w}{\varepsilon} - wu,
\end{align*}
solved on $t \in [0,3]$ and $x \in [0,1]$, using stationary boundary conditions,
\[
    u_t(t,0) = u_t(t,1) = v_t(t,0) = v_t(t,1) = w_t(t,0) = w_t(t,1) = 0,
\]
and initial values,
\begin{align*}
    u(0,x) &= a + 0.1\sin(\pi x),\\
    v(0,x) &= b/a + 0.1\sin(\pi x),\\
    w(0,x) &= b + 0.1\sin(\pi x),
\end{align*}
with parameters $\alpha_{j} = 10^{-2}$, $\rho_{j} = 10^{-3}$, $a = 0.6$, $b = 2$, and $\varepsilon = 10^{-2}$.
We discretize these in space using a second order accurate centered difference approximation with $201$ or $801$ grid points.
As we do not have an analytical solution to this problem, we compute error by comparing against a reference solution generated using the same spatial grid, but that uses ARKODE's default fifth order diagonally implicit method with a time step of $H=10^{-7}$.

We split this problem into the form \eqref{eq:probode} by setting each portion of the right hand side to be the spatially-discretized versions of the operators
\begin{align*}
    \feslow = \begin{bmatrix}
         \rho_u u_x\\
         \rho_v v_x\\
         \rho_w w_x
    \end{bmatrix},\quad
    \fislow = \begin{bmatrix}
         \alpha_u u_{xx}\\
         \alpha_v v_{xx}\\
         \alpha_w w_{xx}
    \end{bmatrix},\quad\text{and}\quad
    \ffast = \begin{bmatrix}
        a - (w + 1)u + u^2v  \\
        wu - u^2v\\
       \frac{b-w}{\varepsilon} - wu
    \end{bmatrix}.
\end{align*}
The slow component for implicit MRI-GARK methods is the sum of $\feslow$ and $\fislow$ which is then treated implicitly.

We note that although this test problem indeed exhibits the same differential structure as large-scale advection-diffusion-reaction PDE models, a significant majority of those models are based on the compressible Navier--Stokes equations, wherein the `slow explicit' operator $\feslow$ would be nonlinear, would dominate the transport of reactants throughout the domain, and would be treated using a shock-capturing or essentially non-oscillatory spatial discretization.  Thus our results which follow should serve as only a simplified test problem for such scenarios, since in reality one would instead expect $\feslow$ to require a significantly larger share of the overall computational effort.  As a result, our subsequent results show only a `best case' scenario for implicit MRI-GARK methods, as implicit treatment of $\feslow$ in such large-scale applications is typically avoided due to its extreme cost and potential for nonlinear solver convergence issues.

For the subcycling portions of each method, we use a fast time step of $h=H/5$.  With the exception of Lie--Trotter we use fast implicit methods having accuracy equal to their corresponding multirate method: IMEX-MRI-GARK3 (a, b) and MRI-GARK-ESDIRK34a use the diagonally implicit method from Section 3.2.3 of \cite{condeImplicitImplicitExplicit2017} with $\beta = (3+\sqrt3)/6$, IMEX-MRI-GARK4 and MRI-GARK-ESDIRK46a use the diagonally implicit (5,3,4) method from \cite{cashDiagonallyImplicitRungeKutta1979}, while Lie--Trotter and Strang--Marchuk use an implicit second order method given by the Butcher table
$\begin{array}{c|cc} 1 & 1 & 0\\ 0 & -1 & 1\\ \hline & 1/2 & 1/2\end{array}$.
For both the implicit slow stages and the implicit fast stages we use a standard Newton-Raphson nonlinear solver with a banded direct linear solver.
% We note, however, that the fast Jacobians are block-diagonal (one block per finite-difference node), while the slow Jacobians couple unknowns across the entire spatial domain.

\begin{figure}[htbp]
    \centering
    \includegraphics[width=\textwidth]{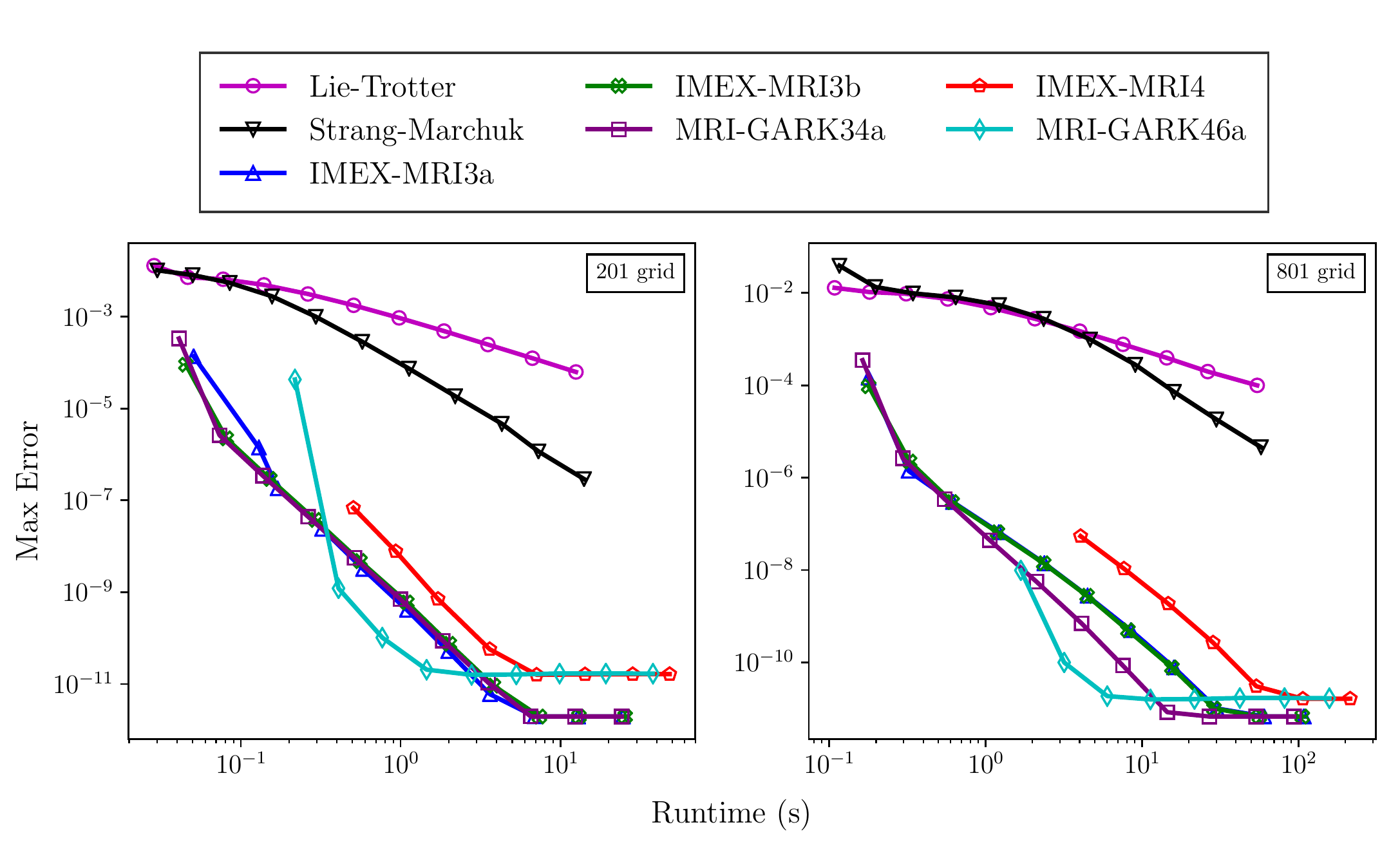}
    \caption{Efficiency for the stiff brusselator test problem from Section \ref{subsec:brusselator}, using 201 grid points (left) and 801 grid points (right). Best fit convergence rates on the 201 grid are $0.91, 1.92, 2.86, 2.92, 2.94, 3.12, 2.94$ for (Lie--Trotter, Strang--Marchuk, IMEX-MRI3a, IMEX-MRI3b, MRI-GARK34a, IMEX-MRI4, and MRI-GARK46a, resp.) and $0.90, 1.87, 2.41, 2.47, 3.02, 2.69, 2.42$ for the 801 grid.
    MRI-GARK46a and IMEX-MRI4 have limited stability on this test problem, with their curves missing for $H>1/40$ and $H>1/80$ respectively on the 201 grid, and $H>1/80$ and $H>1/160$ on the 801 grid. }
    \label{fig:brus_conv}
\end{figure}

For each spatial grid size in Figure \ref{fig:brus_conv}, we plot the runtimes and maximum solution error over a set of 10 evenly-spaced temporal outputs in $[0,3]$ for each method, at each of the slow step sizes $H=0.1 \cdot 2^{-k}$ for $k = 0,\ldots,10$.
We compute least squares fit convergence rates only on points within the asymptotic convergence regime, discarding points at larger $H$ values with higher than expected errors and points at smaller $H$ values where errors have already reached our reference solution accuracy.
We first note that as expected when applying Runge--Kutta methods to stiff applications, the measured convergence rates are slightly deteriorated from their theoretical peaks.  In addition to the challenges presented by stiffness, the reduced convergence for IMEX-MRI-GARK4 and MRI-GARK-ESDIRK46a is likely due to the limited reference solution accuracy of around $10^{-11}$. Additionally, the higher order methods experience order reduction when we increase the spatial grid size from 201 points to 801 points.

Furthermore, we point out that this problem highlights the reduced joint stability region for both the IMEX-MRI-GARK4 and MRI-GARK-ESDIRK46a methods, as the IMEX method was unstable for time step sizes larger than $H=1/80$ for 201 spatial grid points and larger than $H=1/160$ for the 801 spatial grid, while the implicit method was unstable for step sizes larger than $H=1/40$ and $H=1/80$ for 201 and 801 spatial grids respectively. All of the other methods were stable (if inaccurate) at even the largest step sizes tested.

Focusing our discussion on efficiency, at all accuracy levels shown in Figure \ref{fig:brus_conv}, IMEX-MRI-GARK and implicit MRI-GARK schemes are more efficient for this application than legacy approaches.  This is hardly surprising, due to their increased convergence rates and tighter coupling between the operators at the fast and slow time scales.
Comparing the third and fourth order IMEX-MRI-GARK methods, the third order methods are clearly more efficient for this test, which we believe results from three primary factors.  First, the third order methods require fewer slow implicit solves per step (3 vs 5).  Second, the fast-scale implicit Runge--Kutta methods used for both schemes have significantly different costs, with the third and fourth order methods requiring 2 and 5 implicit stages per step, respectively.  Both of these cost differences should be expected due to their differing method order; however the IMEX-MRI-GARK4 also experienced more severe order reduction for this problem, precluding those increased costs from being balanced by a significantly higher achievable convergence rate.

Expanding our consideration to include the full range of higher-order MRI-GARK approaches, MRI-GARK-ESDIRK46a is the most efficient at achieving tight desired accuracies (below $10^{-8}$), while all of the third-order methods were comparably efficient for larger accuracy levels.  For the 201 grid, there is no discernible difference in runtime between our IMEX-MRI-GARK3 a/b methods and MRI-GARK-ESDIRK34a; however MRI-GARK-ESDIRK34a achieves better efficiency for the 801 grid.  We recall, however, that due to the simple linear advection model in this problem, the results shown here represent a \emph{best case} scenario for implicit MRI-GARK methods, whereas the IMEX-MRI-GARK results should more accurately reflect their expected performance on large-scale reactive flow problems.  We thus anticipate that when applied to the targeted large-scale applications, the IMEX-MRI-GARK3 a/b methods will prove to be significantly more efficient, due to their combination of excellent convergence and flexibility in allowing explicit treatment of $\feslow$.

We finally note that of the methods that allow the originally-desired IMEX + multirate treatment of this problem (i.e., \emph{not} including the implicit MRI-GARK methods), the proposed IMEX-MRI-GARK methods enable accuracies that would otherwise be intractable with lower-order approaches.

\section{Conclusions}

In this paper we have introduced a new class of multirate integration methods that support implicit-explicit treatment of the slow time scale.  These IMEX-MRI-GARK methods are highly-flexible: in addition to supporting IMEX treatment of the slow time scale, the fast time scale is only assumed to be solved using another sufficiently-accurate approximation, thereby allowing for the fast time scale to be further decomposed into a mix of implicit and explicit components, or even into a multirate method itself.  As with their related non-IMEX MRI-GARK counterparts \cite{Sandu2019}, the coupling from slow to fast time scale occurs through modification of the fast time-scale function $f^{\fast}(t,y)$ to include a polynomial forcing term, $g(t)$, that incorporates slow time scale tendencies into the fast time scale dynamics.

In addition to defining IMEX-MRI-GARK methods, we have provided rigorous derivation of conditions on their coefficients to guarantee orders three and four.  Furthermore, we have provided the corresponding linear stability function for IMEX-MRI-GARK methods, and extended the definition of ``joint stability'' from Zharovsky et al. \cite{Zharovsky2015Class} to accommodate a three-component additive splitting.

With these theoretical foundations, we have presented three specific IMEX-MRI-GARK methods, two third order methods derived from Ascher, Ruuth and Spiteri's `(3,4,3)' ARK method \cite{ascherImplicitexplicitRungeKuttaMethods1997}, and one fourth order method of our own design.

% The third order methods exhibit excellent linear stability (particularly IMEX-MRI-GARK3b), with stability at least as robust as standard Lie--Trotter and Strang--Marchuk splitting approaches.

We then provided asymptotic convergence results for the three proposed methods, using the standard Kv{\ae}rno-Prothero-Robinson (KPR) multirate test problem, where each method exhibited its expected convergence rate.  To assess method efficiency, we utilized a more challenging stiff brusselator PDE test problem, which showed that the proposed methods were uniformly more efficient than the lower-order Lie--Trotter and Strang--Marchuk methods at all accuracy levels tested.  Moreover, although such methods cannot allow for IMEX treatment of the slow time scale (and thus efficiency comparisons are somewhat artificial), we also compared against recently-proposed implicit MRI-GARK methods \cite{Sandu2019}.  Here, our third order IMEX-MRI-GARK methods proved competitive, but the higher cost per step of our fourth order IMEX-MRI-GARK method rendered it the least efficient of the group.

We note that much work remains.  For starters, we plan to derive new fourth-order IMEX-MRI-GARK methods with optimized linear stability regions and with a decreased cost per step.  We anticipate that this will require simultaneous derivation of both the base IMEX-ARK method \emph{and} its IMEX-MRI-GARK extension, due to the tight interplay between these methods and their joint stability.  An obvious (yet tedious) extension of this work would be to derive the order conditions for fifth-order IMEX-MRI-GARK methods, and to construct tables to implement such approaches.  Additionally, we would like to create new IMEX-MRI-GARK methods that include embeddings, thereby allowing for robust temporal adaptivity at both the slow and fast time scales.  While extension of the IMEX-MRI-GARK algorithm to include an alternate set of IMEX-ARK embedding coefficients is straightforward, creation of optimal embedded multirate methods and fast/slow temporal adaptivity controllers have barely been touched in the literature.  Finally, we anticipate the creation of `solve-coupled' IMEX-MRI-GARK and MRI-GARK methods, and the accompanying work on efficient nonlinear solvers, to allow a tighter coupling between implicit and fast processes in these multirate approaches.

%%%%%

\section*{Acknowledgments}
The authors would like to thank David Gardner, Carol Woodward and John Loffeld for their insightful discussions throughout the derivation of this work.  We would also like to thank the SMU Center for Research Computing for use of the \emph{Maneframe2} computing cluster, where we performed all simulations reported in this work.

%%%%%%% appendices %%%%%%%
\appendix

\section{IMEX-MRI-GARK3a}
\label{sec:IMEX-MRI3a}

The nonzero coefficients for IMEX-MRI-GARK3a (accurate to 36 decimal digits) are:
\small{\begin{align*}
         c^{\slow}_1 &= 0,\\
         c^{\slow}_2 &= c^{\slow}_3 = 0.4358665215084589994160194511935568425,\\
         c^{\slow}_4 &= c^{\slow}_5 = 0.7179332607542294997080097255967784213,\\
         c^{\slow}_6 &= c^{\slow}_7 = c^{\slow}_8 = 1,\\
         \\
         \gamma^{\{0\}}_{2,1} &= -\gamma^{\{0\}}_{3,1} = \gamma^{\{0\}}_{3,3} = \gamma^{\{0\}}_{5,5} = \gamma^{\{0\}}_{6,1} = -\gamma^{\{0\}}_{7,1} = \gamma^{\{0\}}_{7,7} \\
         &= 0.4358665215084589994160194511935568425,\\
         \gamma^{\{0\}}_{4,1} &= -\gamma^{\{0\}}_{5,1} = -0.4103336962288525014599513720161078937,\\
         \gamma^{\{0\}}_{4,3} &= 0.6924004354746230017519416464193294724,\\
         \gamma^{\{0\}}_{5,3} &= -0.8462002177373115008759708232096647362,\\
        %  \gamma^{\{0\}}_{6,1} &= -\gamma^{\{0\}}_{7,1} = 0.112158586264353626882339654002863,\\
         \gamma^{\{0\}}_{6,3} &= 0.9264299099302395700444874096601015328,\\
         \gamma^{\{0\}}_{6,5} &= -1.080229692192928069168516586450436797,\\
         \\
         \omega^{\{0\}}_{2,1} &= \omega^{\{0\}}_{8,7} = 0.4358665215084589994160194511935568425,\\
         \omega^{\{0\}}_{4,1} &= -0.5688715801234400928465032925317932021,\\
         \omega^{\{0\}}_{4,3} &= 0.8509383193692105931384935669350147809,\\
         \omega^{\{0\}}_{5,1} &= -\omega^{\{0\}}_{5,3} = 0.454283944643608855878770886900124654,\\
         \omega^{\{0\}}_{6,1} &= -0.4271371821005074011706645050390732474,\\
         \omega^{\{0\}}_{6,3} &= 0.1562747733103380821014660497037023496,\\
         \omega^{\{0\}}_{6,5} &= 0.5529291480359398193611887297385924765,\\
         \omega^{\{0\}}_{8,1} &= 0.105858296071879638722377459477184953,\\
         \omega^{\{0\}}_{8,3} &= 0.655567501140070250975288954324730635,\\
         \omega^{\{0\}}_{8,5} &= -1.197292318720408889113685864995472431.
\end{align*}}
We note that these coefficients (and all of those that follow) are available electronically in \cite{IMEXMRI_repo}.

\section{IMEX-MRI-GARK3b}
\label{sec:IMEX-MRI3b}

The nonzero coefficients for IMEX-MRI-GARK3b (accurate to 36 decimal digits) are:
\small{\begin{align*}
         c^{\slow}_1 &= 0,\\
         c^{\slow}_2 &= c^{\slow}_3 = 0.4358665215084589994160194511935568425,\\
         c^{\slow}_4 &= c^{\slow}_5 = 0.7179332607542294997080097255967784213,\\
         c^{\slow}_6 &= c^{\slow}_7 = c^{\slow}_8 = 1,\\
         \\
         \gamma^{\{0\}}_{2,1} &= -\gamma^{\{0\}}_{3,1} = \gamma^{\{0\}}_{3,3} = \gamma^{\{0\}}_{5,5} = \gamma^{\{0\}}_{7,7} \\
                      &= 0.4358665215084589994160194511935568425,\\
         \gamma^{\{0\}}_{4,1} &= -\gamma^{\{0\}}_{5,1} = 0.0414273753564414837153799230278275639,\\
         \gamma^{\{0\}}_{4,3} &= 0.2406393638893290165766103513753940148\\
         \gamma^{\{0\}}_{5,3} &= -0.3944391461520175157006395281657292786\\
         \gamma^{\{0\}}_{6,1} &= -\gamma^{\{0\}}_{7,1} = 0.1123373143006047802633543416889605123\\
         \gamma^{\{0\}}_{6,3} &= 1.051807513648115027700693049638099167\\
         \gamma^{\{0\}}_{6,5} &= -0.8820780887029493076720571169238381009\\
         \gamma^{\{0\}}_{7,3} &= -0.1253776037178754576562056399779976346\\
         \gamma^{\{0\}}_{7,5} &= -0.1981516034899787614964594695265986957\\
         \\
         \omega^{\{0\}}_{2,1} &= \omega^{\{0\}}_{8,7} = 0.4358665215084589994160194511935568425,\\
         \omega^{\{0\}}_{4,1} &= -0.1750145285570467590610670000018749059,\\
         \omega^{\{0\}}_{4,3} &= 0.4570812678028172593530572744050964846,\\
         \omega^{\{0\}}_{5,1} &= -\omega^{\{0\}}_{5,3} = 0.06042689307721552209333459437020635774,\\
         \omega^{\{0\}}_{6,1} &= 0.1195213959425454440038786034027936869,\\
         \omega^{\{0\}}_{6,3} &= -1.84372522668966191789853395029629765,\\
         \omega^{\{0\}}_{6,5} &= 2.006270569992886974186645621296725542,\\
         \omega^{\{0\}}_{7,1} &= -0.5466585780430528451745431084418669343,\\
         \omega^{\{0\}}_{7,3} &= 2,\\
         \omega^{\{0\}}_{7,5} &= -1.453341421956947154825456891558133066,\\
         \omega^{\{0\}}_{8,1} &= 0.105858296071879638722377459477184953,\\
         \omega^{\{0\}}_{8,3} &= 0.655567501140070250975288954324730635,\\
         \omega^{\{0\}}_{8,5} &= -1.197292318720408889113685864995472431.
\end{align*}}

\section{IMEX-MRI-GARK4}
\label{sec:IMEX-MRI4}

The nonzero coefficients for IMEX-MRI-GARK4 (accurate to 36 decimal digits) are:
\small{\begin{align*}
         \cbf^{\slow} &= \left[\begin{array}{ccccccccccccc} 0 & \frac12 & \frac12 & \frac58 & \frac58 & \frac34 & \frac34 & \frac78 & \frac78 & 1 & 1 & 1 \end{array}\right],\\
         \gamma^{\{0\}}_{2,1} &= \tfrac{1}{2},  \\
         \gamma^{\{0\}}_{3,1} &= -\gamma^{\{0\}}_{3,3} = -\gamma^{\{0\}}_{5,5} = -\gamma^{\{0\}}_{7,7} = -\gamma^{\{0\}}_{9,9} = -\gamma^{\{0\}}_{11,11} = -\tfrac{1}{4},  \\
         \gamma^{\{0\}}_{4,1} &= -3.97728124810848818306703385146227889,\\
         \gamma^{\{0\}}_{4,3} &= 4.10228124810848818306703385146227889,\\
         \gamma^{\{0\}}_{5,1} &= -0.0690538874140169123272414708480937406,  \\
         \gamma^{\{0\}}_{5,3} &= -0.180946112585983087672758529151906259,  \\
         \gamma^{\{0\}}_{6,1} &= -1.76176766375792052886337896482241241,\\
         \gamma^{\{0\}}_{6,3} &= 2.69452469837729861015533815079146138,\\
         \gamma^{\{0\}}_{6,5} &= -0.807757034619378081291959185969048978,\\
         \gamma^{\{0\}}_{7,1} &= 0.555872179155396948730508100958808496, \\
         \gamma^{\{0\}}_{7,3} &= -0.679914050157999501395850152788348695, \\
         \gamma^{\{0\}}_{7,5} &= -\gamma^{\{0\}}_{8,5} = -0.125958128997397447334657948170459801, \\
         \gamma^{\{0\}}_{8,1} &= -5.84017602872495595444642665754106511,\\
         \gamma^{\{0\}}_{8,3} &= 8.17445668429191508919127080571071637,\\
         \gamma^{\{0\}}_{8,7} &= -2.33523878456435658207950209634011106, \\
         \gamma^{\{0\}}_{9,1} &= -1.9067926451678118080947593050360523, \\
         \gamma^{\{0\}}_{9,3} &= -\gamma^{\{0\}}_{10,3} = -1.54705781138512393363298457924938844 \\
         \gamma^{\{0\}}_{9,5} &= -\gamma^{\{0\}}_{10,5} = 4.12988801314935030595449173802031322, \\
         \gamma^{\{0\}}_{9,7} &= -\gamma^{\{0\}}_{10,7} = -0.926037556596414564226747853734872477, \\
         \gamma^{\{0\}}_{10,1} &= 3.33702815168872605455765278252966252,\\
         \gamma^{\{0\}}_{10,9} &= -1.55523550652091424646289347749361021, \\
         \gamma^{\{0\}}_{11,1} &= -0.821293629221007618720524112312446752, \\
         \gamma^{\{0\}}_{11,3} &= 0.328610356068599988551677264268969646, \\
         \gamma^{\{0\}}_{11,5} &= 0.678001812102026694142641232421139516, \\
         \gamma^{\{0\}}_{11,7} &= -0.342779287862800022896645471462060708, \\
         \gamma^{\{0\}}_{11,9} &= -0.0925392510868190410771489129156017025, \\
         \\
         \gamma^{\{1\}}_{4,1} &= -\gamma^{\{1\}}_{4,3} = 8.70456249621697636613406770292455778,\\
         \gamma^{\{1\}}_{6,1} &= 3.91164310234387488238124087134101229,\\
         \gamma^{\{1\}}_{6,3} &= -5.02715717158263104496515924327911025,\\
         \gamma^{\{1\}}_{6,5} &= 1.11551406923875616258391837193809796,\\
         \gamma^{\{1\}}_{8,1} &= 10.8186076991391180114318371131645132,\\
         \gamma^{\{1\}}_{8,3} &= -14.9890852682678311755908413058447354,\\
         \gamma^{\{1\}}_{8,7} &= 4.17047756912871316415900419268022213, \\
         \gamma^{\{1\}}_{10,1} &= -2.61047101304182849292578695498722043, \\
         \gamma^{\{1\}}_{10,9} &= 2.61047101304182849292578695498722043, \\
         \\
         \omega^{\{0\}}_{2,1} &= \tfrac{1}{2}, \\
         \omega^{\{0\}}_{4,1} &= -1.91716534363662868878172216064946905,\\
         \omega^{\{0\}}_{4,3} &= 2.04216534363662868878172216064946905,\\
         \omega^{\{0\}}_{5,1} &= -\omega^{\{0\}}_{5,3} = -0.404751031801105942697915907046990469, \\
         \omega^{\{0\}}_{6,1} &= 11.4514660224922163666569802860263173,\\
         \omega^{\{0\}}_{6,3} &= -30.2107574752650427144064781557395061,\\
         \omega^{\{0\}}_{6,5} &= 18.8842914527728263477494978697131888,\\
         \omega^{\{0\}}_{7,1} &= -0.709033564760261450684711672946330144,\\
         \omega^{\{0\}}_{7,3} &= 1.03030720858751876652616190884004718,\\
         \omega^{\{0\}}_{7,5} &= -\omega^{\{0\}}_{8,5} = -0.321273643827257315841450235893717036, \\
         \omega^{\{0\}}_{8,1} &= -29.9954871645582843984091068494419927,\\
         \omega^{\{0\}}_{8,3} &= 37.605982774991801805364896856243857,\\
         \omega^{\{0\}}_{8,7} &= -7.80676925426077472279724024269558129, \\
         \omega^{\{0\}}_{9,1} &= 3.10466505427296211633876939184912422,\\
         \omega^{\{0\}}_{9,3} &= -\omega^{\{0\}}_{10,3} = -2.43032501975716229713206592741556636,\\
         \omega^{\{0\}}_{9,5} &= -\omega^{\{0\}}_{10,5} = -1.90547930115152463521920165948384213,\\
         \omega^{\{0\}}_{9,7} &= -\omega^{\{0\}}_{10,7} = 1.23113926663572481601249819505028427, \\
         \omega^{\{0\}}_{10,1} &= -2.42442954775204786987587591435551401,\\
         \omega^{\{0\}}_{10,9} &= -0.555235506520914246462893477493610215,\\
         \omega^{\{0\}}_{11,1} &= -0.010441350444797485902945189451653542,\\
         \omega^{\{0\}}_{11,3} &= 0.0726030361465507450515210450548814161,\\
         \omega^{\{0\}}_{11,5} &= -0.128827595167726095223945409857642431,\\
         \omega^{\{0\}}_{11,7} &= 0.112935535009382356613944010712215408,\\
         \omega^{\{0\}}_{11,9} &= \omega^{\{0\}}_{12,9} = -0.0462696255434095205385744564578008512,\\
         \omega^{\{0\}}_{12,1} &= -0.81085227877621013281757892286079321,\\
         \omega^{\{0\}}_{12,3} &= 0.25600731992204924350015621921408823,\\
         \omega^{\{0\}}_{12,5} &= 0.806829407269752789366586642278781947,\\
         \omega^{\{0\}}_{12,7} &= -0.455714822872182379510589482174276116,\\
         \omega^{\{0\}}_{12,11} &= \tfrac{1}{4}  \\
         \\
         \omega^{\{1\}}_{4,1} &= -\omega^{\{1\}}_{4,3}= 4.0843306872732573775634443212989381,\\
         \omega^{\{1\}}_{6,1} &= -21.8434299813822208479181287579586536,\\
         \omega^{\{1\}}_{6,3} &= 59.6120128869278735434171244973850312,\\
         \omega^{\{1\}}_{6,5} &= -37.7685829055456526954989957394263776,\\
         \omega^{\{1\}}_{8,1} &= 61.6590414586370916981876370447766458,\\
         \omega^{\{1\}}_{8,3} &= -77.2725799671586411437821175301678084,\\
         \omega^{\{1\}}_{8,7} &= 15.6135385085215494455944804853911626,\\
         \omega^{\{1\}}_{10,1} &= -\omega^{\{1\}}_{10,9} = -1.11047101304182849292578695498722043.
\end{align*}}

\bibliographystyle{siamplain}
\bibliography{references}
%\nocite{*}
\end{document}